\numberwithin{equation}{section}
\newtheorem{Thm}{Theorem}[section]
\newtheorem{Lem}{Lemma}[section]
\newtheorem{Prop}{Proposition}[section]
\newtheorem{Cor}{Corollary}[section]
\newtheorem{Conj}{Conjecture}[section]
\newcommand{\R}{\mathbb{R}}
\newcommand{\Z}{\mathbb{Z}}
\newcommand{\scrL}{\mathscr{L}}
\newcommand{\calL}{\mathcal{L}}
\begin{document}
\title[Properties of fractional p-Laplace]{Properties of fractional p-Laplace equations with sign-changing potential}
\author{Yubo Duan}
\address{School of Mathematical Sciences\\ Nankai University\\ Tianjin 300071 China}
\email{1120190017@mail.nankai.edu.cn}
\thanks{Acknowledgements: This work is supported by the NSFC under the grands 12271269 and supported by the Fundamental Research Funds for the Central Universities.}
\author{Yawei Wei}
\address{School of Mathematical Sciences and LPMC\\ Nankai University\\ Tianjin 300071 China}
\email{weiyawei@nankai.edu.cn}

\keywords{fractional p-Laplacian; Maximum principles; moving plane method; sliding method}
	
\subjclass[2020]{35R11; 35J60; 35B40}
	
	
\begin{abstract}
In this paper, we consider the nonlinear equation involving the fractional p-Laplacian with sign-changing potential. This model draws inspiration from De Giorgi Conjecture. There are two main results in this paper. Firstly, we obtain that the solution is radially symmetric within the bounded domain, by applying the moving plane method. Secondly, by exploiting the idea of the sliding method, we construct the appropriate auxiliary functions to prove that the solution is monotone increasing in some direction in the unbounded domain. The different properties of the solution in bounded and unbounded domains are mainly attributed to the inherent non-locality of the fractional p-Laplacian.
	\end{abstract}

	\maketitle	
\section{Introduction}
In this paper, we study the nonlinear equation involving fractional p-Laplace operator with sign-changing potential,
\begin{equation} \label{SUM}
\left\{
 \begin{aligned}
 &(-\Delta)_{p}^{s}u(x)+a(x)|u|^{p-2}u(x) = f(u(x)), \,\,\, \text{in} \,\,\, \Omega,\\
 &u>0,\qquad   \textup{in}\,\,\,  \Omega, \\
 &u= 0 , \qquad \textup{on}\,\,\, \Omega^{c},
 \end{aligned}
 \right.
\end{equation}
where $\Omega$ is either a bounded or an unbounded domain.
The fractional p-Laplacian is a non-local and nonlinear operator, which is defined as follows
\begin{equation}\label{fpl}
\begin{aligned}
(-\Delta)_{p}^{s}u(x)&:=C_{n,s,p}P.V.\int_{\R^{n}}\frac{|u(x)-u(y)|^{p-2}(u(x)-u(y))}{|x-y|^{n+sp}}dy\\
&=C_{n,s,p}\lim_{\varepsilon \rightarrow 0}\int_{\R^{n}\setminus B_{\varepsilon}(x)}\frac{|u(x)-u(y)|^{p-2}(u(x)-u(y))}{|x-y|^{n+sp}}dy,
\end{aligned}
\end{equation}
where $0<s<1$, $2< p <\infty$, the constant $C_{n,s,p}$ depends on $n$, $s$, and $p$, and $P.V.$ is the Cauchy principal value. The potential function $a(x)$ may change signs here.
Denote simply
\begin{equation} \label{LP}
\scrL_{p}u:=(-\Delta)_{p}^{s}u+a(x)|u|^{p-2}u.
\end{equation}
In order to make the integral in \eqref{fpl} well defined, we require that
$$u\in C^{1,1}_{loc}(\Omega)\cap \mathcal L_{sp}, $$
with
$$\mathcal L_{sp}:=\{ u\in L^{p-1}_{loc}|\int_{\R^{n}}\frac{|u(x)|^{p-1}}{1+|x|^{n+sp}}dx <\infty \}.$$


The existence and regularity of solutions of the equations involving the operator $(-\Delta)_{p}^{s}$ preserves fruitful results. In \cite{SKM}, Mosconi, et al, obtained nontrivial solutions of the Brezis-Nirenberg problem concerning the fractional p-Laplace operator. Then, the operator $(-\Delta)_{p}^{s}$ with the potential term, like the form $\scrL_{p}$, has been much concerned. Radulescu, et al, in \cite{DXZ} investigated the existence of weak solutions for a perturbed nonlinear elliptic equation driven by $(-\Delta)_{p}^{s}$, with the positive potential function. Ambrosio \cite{VA}, Zhang, and collaborators \cite{ZTZ} derived the existence of infinitely many weak solutions for the equation involving $\scrL_{p}$ with sign-changing potential function $a(x)$. Chen, Liu \cite{LC}, and  Li \cite{Liz} have obtained the uniform H\"older norm estimate of the solution of the fractional p-Laplace equation on the whole space, if the domain satisfies the uniform two-sided ball condition. These conclusions ensure the existence and H\"older regularity of the solution of the fractional p-Laplace equation \eqref{SUM} that we consider.


The moving plane method has been widely applied to study fractional partial differential equations. This method was initially invented by Alexanderoff in the early 1950s. Later, it was evolved by Serrin \cite{SJ}, Gidas, Ni, and Nirenberg \cite{GNN1,GNN2}, Caffarelli, Gidas, and Spruck \cite{CGS}. Recently, Chen and Li in \cite{CLC1} developed this method so that it can be applied to various semi-linear equations involving the fractional Laplacian to explore symmetry, monotonicity and non-existence of the solutions. In \cite{CLC},  Chen and Liu also introduced maximum principles and take the moving plane method to establish symmetry of solutions of the nonhomogeneous fractional p-Laplace equation with $(-\Delta)_{p}^{s}$, both in the bounded domain and the whole space. Dai, Fang, and Qin \cite{DFQ} explored fractional order Hartreen equations by applying this method. In the present paper, we also make use of the moving plane method to derive that the solution of the equation \eqref{SUM} is radially symmetric within a bounded domain.

While the sliding method is similar to the moving plane method, which can be applied to the equations without decaying conditions at infinity, to discover the properties of the solutions. The classical sliding method was invented by Berestycki and Nirenberg\cite{BN2}-\cite{BN3}, which is for establishing qualitative properties of solutions for partial differential equations involving the Laplace operator, such as symmetry, monotonicity, uniqueness, etc. Chen, Wu \cite{WLC1}, and Liu \cite{LC} have developed the sliding method for the fractional p-Laplace equation with $(-\Delta)_{p}^{s}$. Differing slightly from the moving planes method, the idea of the sliding method is to compare the solution with its translation rather than its reflection, and the key point of the sliding process is to find a maximizing sequence near infinity. For the equation \eqref{SUM} in the unbounded domain, we prove that the solution is monotone increasing in some direction by using the idea of the sliding method. Here the classical sliding method in \cite{LC} has been modified through a series of appropriate auxiliary functions.

The main results of this paper are as follows.

\begin{Thm} \label{Them1}
Let $\Omega=B_{1}(0)$, $p>2$ and $u \in C_{loc}^{1,1}(\Omega) \cap \mathcal{L}_{sp}$  be a positive bounded solution of \eqref{SUM}. Let the potential function $a(x)$ be bounded from below, and $f$ be Lipschitz continuous with respect to $u$. Then the solution $u(x)$ is radially symmetric and decreasing from the origin.
\end{Thm}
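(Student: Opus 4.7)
I proceed via the moving plane method applied in an arbitrary direction, which by rotation I take to be $e_1$, and show symmetry of $u$ across the hyperplane $T_0 = \{x_1 = 0\}$. Radial symmetry then follows by rotating the direction, and strict decrease from the origin follows from the strict inequalities produced along the way. For $\lambda \in (-1, 0]$ I introduce the cap $\Sigma_\lambda = \{x \in \R^n : x_1 < \lambda\}$, the reflection $x^\lambda = (2\lambda - x_1, x_2, \ldots, x_n)$, the reflected solution $u_\lambda(x) := u(x^\lambda)$, and the antisymmetric difference $w_\lambda := u_\lambda - u$. Since $B_1(0)$ is invariant under reflection across each $T_\lambda$, and $|x^\lambda|<|x|$ for $x \in \Sigma_\lambda \cap B_1$, the change of variables $y \mapsto y^\lambda$ on the complementary half-space puts $(-\Delta)_p^s u_\lambda(x) - (-\Delta)_p^s u(x)$ into the standard form of a nonlocal expression acting on the antisymmetric function $w_\lambda$.

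The first technical ingredient I need is a \emph{narrow region principle} for $\scrL_p$ on antisymmetric functions: for $\lambda$ close to $-1$ the cap $\Sigma_\lambda \cap B_1$ is thin, and I claim $w_\lambda \geq 0$ throughout $\Sigma_\lambda$. Assuming for contradiction that $w_\lambda$ attains a negative infimum at some $\bar x \in \Sigma_\lambda \cap B_1$, the elementary inequality $|a|^{p-2}a - |b|^{p-2}b \geq c_p(|a|+|b|)^{p-2}(a-b)$ applied inside the kernel yields a pointwise bound of the type $(-\Delta)_p^s u_\lambda(\bar x) - (-\Delta)_p^s u(\bar x) \leq -c\, \xi_p(\bar x)\, |w_\lambda(\bar x)|^{p-1}$, where the weight $\xi_p(\bar x)$ blows up as $|\Sigma_\lambda|$ decreases. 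Substituting this into the equation, the Lipschitz continuity of $f$ produces an $O(|w_\lambda(\bar x)|)$ error, and the lower bound on $a$ together with the comparison $|u_\lambda|^{p-2}u_\lambda - |u|^{p-2}u \leq c_p(|u_\lambda|+|u|)^{p-2} w_\lambda$ controls the potential contribution by $O(|w_\lambda(\bar x)|^{p-1})$; for $|\Sigma_\lambda|$ small enough the dominant weighted term on the left cannot be balanced, giving the contradiction.

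With this in hand I run the standard continuation: set $\lambda_0 := \sup\{\lambda \leq 0 : w_\mu \geq 0 \text{ on } \Sigma_\mu \text{ for all } \mu \leq \lambda\}$ and suppose for contradiction $\lambda_0 < 0$. A strong maximum principle for $\scrL_p$ on antisymmetric functions gives the dichotomy $w_{\lambda_0} \equiv 0$ or $w_{\lambda_0} > 0$ inside $\Sigma_{\lambda_0} \cap B_1$; since $u > 0$ inside $B_1$ while $u = 0$ on $\partial B_1$, the first alternative is ruled out at boundary points of $\Sigma_{\lambda_0} \cap B_1$ that reflect into the interior, so $w_{\lambda_0} > 0$ strictly. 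Compactness on a subdomain away from $T_{\lambda_0}$ together with the narrow region principle on the thin strip $\Sigma_{\lambda_0+\epsilon} \setminus \Sigma_{\lambda_0-\delta}$ then allows me to push past $\lambda_0$, contradicting the supremum. Hence $\lambda_0 = 0$; the symmetric argument starting from $\lambda = 1$ and moving inward yields the reverse inequality, $w_0 \equiv 0$, and $u$ is symmetric across $T_0$. Rotational arbitrariness gives radial symmetry, and the strict positivity at every intermediate $\lambda$ forces $u$ to be strictly decreasing in $|x|$.

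The hardest step is the narrow region principle, where two features of the problem interact: the nonlinearity of $(-\Delta)_p^s$ forbids linearization and forces one to work directly at the kernel level through the $p$-Laplace comparison inequality above, and the sign change of $a(x)$ contributes an extra term $a(x)\bigl[|u_\lambda|^{p-2}u_\lambda - |u|^{p-2}u\bigr] + [a(x^\lambda) - a(x)]|u_\lambda|^{p-2}u_\lambda$ whose unfavourable part must be absorbed uniformly in $\lambda$. The boundedness-from-below hypothesis on $a$ is precisely what is needed for this absorption once the cap is thin, but securing the uniformity in $\lambda$ and simultaneously establishing a compatible antisymmetric strong maximum principle for $\scrL_p$—neither of which appears directly in the positive-potential framework of \cite{CLC}—is the technical crux of the argument.
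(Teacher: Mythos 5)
Your proposal follows essentially the same route as the paper: reflect across $T_\lambda$, run a narrow-region principle at the starting position by splitting the kernel into the $I_1$ term (sign-definite by monotonicity of $G$ and the kernel comparison) and the $I_2$ term (which blows up as the cap thins, via the lower bound on $G'$ that is the paper's Lemma 2.1), then continue the plane using a strong-maximum-principle dichotomy for the antisymmetric difference at the critical position $\lambda_0$, handle the thin strip $\Omega_{\lambda_0+\varepsilon}\setminus\Omega_{\lambda_0}$ again by the narrow-region principle, and conclude radial symmetry by rotating the direction. This is exactly the structure of the paper's Step 1 and Step 2.

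However, you correctly put your finger on a point the paper glosses over, and which remains unresolved in your plan as well: the cross term involving $a(x^\lambda)-a(x)$. Since $u$ satisfies $(-\Delta)_p^s u(x)+a(x)|u|^{p-2}u(x)=f(u(x))$, evaluating the equation at $x_0^\lambda$ and using translation/reflection invariance of $(-\Delta)_p^s$ gives $(-\Delta)_p^s u_\lambda(x_0)+a(x_0^\lambda)|u_\lambda|^{p-2}u_\lambda(x_0)=f(u_\lambda(x_0))$. Hence with $\scrL_p v(x)=(-\Delta)_p^s v(x)+a(x)|v|^{p-2}v(x)$ one gets
\begin{equation*}
\scrL_p u_\lambda(x_0)-\scrL_p u(x_0)=f(u_\lambda(x_0))-f(u(x_0))+\bigl[a(x_0)-a(x_0^\lambda)\bigr]\,|u_\lambda(x_0)|^{p-2}u_\lambda(x_0),
\end{equation*}
whereas the paper's display simply equates $\scrL_p u_\lambda(x_0)-\scrL_p u(x_0)$ with $f(u_\lambda(x_0))-f(u(x_0))$. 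That identity is correct only when $a(x_0)=a(x_0^\lambda)$, i.e.\ when $a$ is invariant under reflection across every $T_\lambda$ in every direction, which is precisely radiality of $a$. The stated hypothesis that $a$ is merely bounded from below controls the term $a(x_0)\bigl[G(u_\lambda(x_0))-G(u(x_0))\bigr]$ (which is what the paper explicitly invokes) but says nothing about $[a(x_0)-a(x_0^\lambda)]G(u_\lambda(x_0))$. You flag this as ``the technical crux'' but do not say how to absorb it, and in fact it cannot be absorbed in general: a bounded-below but genuinely non-radial potential, e.g.\ $a(x)=\sin(Nx_1)$ for large $N$, can break the radial symmetry of solutions, so no version of this argument can succeed without an additional symmetry or monotonicity hypothesis on $a$. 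Thus both your plan and the paper's proof share the same gap, and the theorem as stated appears to require an unstated assumption that $a$ is radial.
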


We recall the fact that the solution of the fractional p-equation has the uniform H\"older estimate in $\R^{n}$ with the domain $\Omega$ satisfying the uniform two-sided ball condition, which includes the exterior and interior sphere conditions, the details are referred to \cite{LC},\cite{Liz}.

\begin{Thm}\label{Them2}
Let $\Omega$ be an unbounded domain in $x_{n}$-direction with the uniform two-sided ball condition, $p>2$ and $0<s<1$. Let the solution $u \in C_{loc}^{1,1}(\Omega) \cap \mathcal{L}_{sp}$  be a positive bounded solution of \eqref{SUM} with the upper bound $\mu>0$.  Let $f(u)$ be Lipschitz continuous with respect to $u$, and satisfy the following conditions when $0<t_{0}<t_{1}< \mu$,
\begin{itemize}
\item[$(F_{1})$]  $f(u)>0$ for $u\in(0,\mu)$, and $f(u)\leq 0$ for $u(x)\geq \mu$.
\item[$(F_{2})$]  $f(u)\geq \delta_{0}u$ on $[0,t_{0}]$  for some  $\delta_{0}>0$.
\item[$(F_{3})$]  $f(u)$ is non-increasing for $u\in (t_{1},\mu)$.
\end{itemize}
And let the potential $a(x)$ be such that
\begin{itemize}
 \item[$(A_{1})$] $|a(x)|<\left|\frac{df}{du}\big/\frac{dG}{du}\right|$  for $x\in \Omega$,
      where $G(u):=|u|^{p-2}u$.
 \item[$(A_{2})$] $|a(x)|\thicksim\frac{1}{|x|^{\alpha}}$ with some fixed $\alpha>0$,  for $dist(x,\partial \Omega)>R_{1}>0$, where $R_{1}$ is  fixed and large.
 \end{itemize}

 Then the solution $u$ is strictly monotone increasing in $x_{n}$-direction.
\end{Thm}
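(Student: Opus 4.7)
The plan is to apply the sliding method in the direction $e_n$: for each $\tau>0$, set $u_\tau(x) := u(x+\tau e_n)$ and $w_\tau(x) := u_\tau(x) - u(x)$ on the overlap $\Omega \cap (\Omega - \tau e_n)$, and show that $w_\tau \geq 0$ everywhere on this overlap for every $\tau > 0$; strict monotonicity of $u$ in $x_n$ then follows by the strong maximum principle for the linearized nonlocal operator. Writing $G(t) := |t|^{p-2}t$ and applying the mean value theorem to both $f$ and $G$ in the difference of the equations satisfied by $u_\tau$ and $u$, the function $w_\tau$ satisfies a schematic identity of the form
\begin{equation*}
\mathcal{A}_\tau(w_\tau)(x) + \bigl[a(x+\tau e_n)G'(\eta) - f'(\xi)\bigr] w_\tau(x) = \bigl[a(x)-a(x+\tau e_n)\bigr] G(u(x)),
\end{equation*}
where $\mathcal{A}_\tau$ is the linearized nonlocal operator obtained by expanding $(-\Delta)_p^s u_\tau - (-\Delta)_p^s u$ along a convex combination of $u$ and $u_\tau$, and $\xi(x),\eta(x)$ lie between $u(x)$ and $u_\tau(x)$. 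Hypothesis $(A_1)$ is arranged so that the coefficient of $w_\tau$ on the left has the correct sign for a maximum principle, while $(A_2)$ forces the right-hand side to be of order $|x|^{-\alpha}$ far from $\partial\Omega$.

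Next I would execute the start-up step, showing $w_\tau \geq 0$ for all sufficiently large $\tau \geq T_0$. Using the uniform H\"older estimate up to $\partial\Omega$ supplied by the uniform two-sided ball condition (\cite{LC, Liz}), the lower bound on $f$ from $(F_2)$ near $\{u = 0\}$, and the fact that $u \leq \mu$ with $f$ non-increasing on $(t_1,\mu)$ by $(F_3)$, I would introduce an auxiliary function $\psi_\tau(x) := w_\tau(x) + \varepsilon\,\varphi(x)$, in which $\varphi$ is a fixed smooth barrier whose fractional $p$-Laplacian dominates the source at infinity (via the $|x|^{-\alpha}$ decay from $(A_2)$) and which is nonnegative on the complement of the overlap. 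A narrow-region / maximum-principle argument applied to the linearized operator $\mathcal{A}_\tau$ then gives $\psi_\tau \geq 0$ on the overlap, and letting $\varepsilon \downarrow 0$ yields $w_\tau \geq 0$.

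Third, set $\tau_0 := \inf\{\,T>0 : w_\tau \geq 0 \text{ on the overlap for all } \tau \geq T\,\}$ and argue by contradiction that $\tau_0 = 0$. Assuming $\tau_0 > 0$, continuity gives $w_{\tau_0} \geq 0$; the strong maximum principle for the linearized operator, together with the boundary information $u|_{\partial\Omega} = 0 < u_{\tau_0}|_{\partial\Omega}$, upgrades this to $w_{\tau_0} > 0$ strictly in the interior. I would then pick $\tau_k \uparrow \tau_0$ and points $x^k$ with $w_{\tau_k}(x^k) < 0$; the case of a bounded subsequence is ruled out by continuity at $\tau_0$, so $|x^k| \to \infty$. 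Translating $v^k(y) := u(y + x^k)$, using the uniform H\"older estimate to extract a subsequential limit $v_\infty$, and noting that $a \to 0$ by $(A_2)$ forces $v_\infty$ to solve a limiting fractional $p$-Laplace equation with no potential, I would reach a contradiction by applying the sliding inequality to $v_\infty$ together with $(F_2)$ in the regime where $v_\infty$ is small.

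The main obstacle, I expect, is designing the auxiliary barrier $\varphi$ in the start-up step. The coefficients of the linearized operator $\mathcal{A}_\tau$ depend non-trivially on both $u$ and $u_\tau$, and the source $[a(x)-a(x+\tau e_n)]G(u)$ does not inherit a definite sign from $a$ alone; $\varphi$ must simultaneously absorb the nonlocal remainder produced by the linearization, dominate this source uniformly in $\tau$ at infinity, and remain compatible with $(A_1)$ in the region where $a$ changes sign. This is where hypothesis $(A_1)$ plays its decisive role, and it is the step I would expect to demand the delicate series of auxiliary functions alluded to in the introduction.
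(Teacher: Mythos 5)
Your high-level plan (slide in $e_n$, start at large $\tau$, define $\tau_0$ as an infimum, push $\tau_0$ to $0$) matches the paper's, and your reading of the role of $(A_1)$ (sign control on the zeroth-order coefficient) and $(A_2)$ (decay of the extra source $[a(x)-a(x+\tau e_n)]G(u)$) is correct. But there is a genuine gap in the start-up step. You propose a ``narrow-region / maximum-principle'' argument with a \emph{fixed} smooth barrier $\varphi$, and that is the wrong tool here for two reasons. First, the sliding is in the direction in which $\Omega$ is unbounded, so there is no narrow region: for \emph{no} value of $\tau$ is the overlap $\Omega\cap(\Omega-\tau e_n)$ thin, and the narrow-region maximum principle simply has nothing to bite on. The way the paper actually starts is by first proving, as separate preparatory results, that $u(x)>\widehat c>0$ far from $\partial\Omega$ (Lemma~3.2) and that $u(x)\to\mu$ as $x_n\to\infty$ (Proposition~3.1). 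It is Proposition~3.1 -- not the smallness of the overlap -- that, for $\tau$ large, forces $u_\tau(x)\in(t_1,\mu)$ throughout $\Omega$, so that $(F_3)$ gives $f(u)-f(u_\tau)\le 0$ and, combined with $(A_1)$, delivers the sign needed in the contradiction. You invoke $(F_2)$, $(F_3)$ and the H\"older estimate, but you never actually establish $u\to\mu$ at infinity, and without that the start-up has no sign information to work with. Second, because $\Omega$ is unbounded, $\sup w_\tau$ need not be attained. Adding a single fixed bounded barrier $\varphi$ to $w_\tau$ does not cure that; the perturbed function can still fail to achieve its supremum. The paper instead constructs a family of bump functions $\Phi_{r_k}$ centered at a maximizing sequence $\{x^k\}$, with $\varepsilon_k,r_k\to0$ calibrated against $A-\omega_\tau(x^k)$, so that the perturbed function attains an interior maximum in the moving ball $B_{r_k}(x^k)$; the explicit estimate $\bigl|(-\Delta)_p^s(u+\varepsilon_k\Phi_{r_k})-(-\Delta)_p^su\bigr|\le\varepsilon_kC_1r_k^{-sp}+C_2r_k^{p(1-s)}$ at that point is what closes the contradiction. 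This moving construction is precisely what the introduction flags as the paper's new technical device, and a fixed barrier does not replace it.

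Your third step has a related issue: you propose to pass to a translated limit $v_\infty$ and derive a contradiction from the limiting equation ``together with $(F_2)$ in the regime where $v_\infty$ is small.'' The paper's contradiction in the corresponding case is of a different nature: via Arzel\`a--Ascoli and the maximizing-sequence estimate one finds $u^\infty(y)\equiv u^\infty(y+\tau_0e_n)$ on a set reaching outside $\Omega$, so iterating the translation gives a point where $u^\infty$ is simultaneously $0$ (from the exterior boundary condition) and $\mu$ (from Proposition~3.1), which is absurd. Again Proposition~3.1 is essential, and $(F_2)$ alone does not supply it. In short, the skeleton of your plan is right, but you are missing the two preparatory results ($u>\widehat c$ away from $\partial\Omega$, $u\to\mu$ as $x_n\to\infty$) that actually make the sliding method start and finish, and the fixed-barrier/narrow-region mechanism you propose in their place does not apply in a domain unbounded in the sliding direction.
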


The conditions $(F_{1})$ and $(F_{2})$ that are relative to the inhomogeneous term $f(u)$ in \eqref{SUM} are mainly drawn from \cite{LC} and \cite{WLC1}. And the intuitive idea of the conditions $(F_{1})$-$(F_{3})$ comes from the following  De Giorgi Conjecture in \cite{DG}.
\begin{Conj} \label{conj}
 If u is a solution of
 $$-\Delta u=u-u^{3},$$
such that
$$|u|\leq 1 \,\, \mbox{in} \,\, \R^{n}, \,\,\,\lim_{x_{n}\rightarrow\pm\infty}u(x^{\prime},x_{n})=\pm 1 \,~ \mbox{for all}~ \, x^{\prime}\in \R^{n-1},$$
and
$$\frac{\partial u}{\partial x_{n}}>0,$$
then there exists a vector $\mathbf{a}\in \R^{n-1}$ and a function $u_{1}:\, \R \to \R$ such that
$$u(x^{\prime},x_{n})=u_{1}(\mathbf{a}\cdot x^{\prime}+x_{n}).$$
\end{Conj}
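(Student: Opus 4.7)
The plan is to establish monotonicity in the $x_{n}$-direction by a sliding method adapted to the non-translation-invariant potential $a(x)$. For each $\tau > 0$ set $u^{\tau}(x) := u(x', x_{n} + \tau)$ and $w^{\tau}(x) := u^{\tau}(x) - u(x)$ on $\Omega_{\tau} := \Omega \cap (\Omega - \tau e_{n})$. The goal is to prove $w^{\tau} \geq 0$ on $\Omega_{\tau}$ for every $\tau > 0$ and then strengthen to strict positivity. I would slide from $\tau = +\infty$ down to $0$, define
\begin{equation*}
\tau_{0} := \inf\{\tau > 0 \,:\, w^{\sigma} \geq 0 \text{ on } \Omega_{\sigma} \text{ for all } \sigma \geq \tau\},
\end{equation*}
and aim to show $\tau_{0} = 0$.

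As a first step I would establish the asymptotic profile $u(x', x_{n}) \to \mu$ as $x_{n} \to +\infty$, uniformly on compact $x'$-slices. Combining $(F_{1})$, $(F_{2})$, the decay $(A_{2})$ (which makes $a(x)$ vanish deep inside $\Omega$), the uniform H\"older estimate from \cite{LC,Liz}, and a blow-down/limit-profile argument for $(-\Delta)_{p}^{s}$, the only bounded positive limit consistent with the structural hypotheses is the constant $\mu$. This asymptotic behaviour together with $(F_{3})$ initializes the sliding: for $\tau$ sufficiently large, both $u^{\tau}$ and $u$ lie in $(t_{1},\mu)$ on any compact set bounded away from $\partial\Omega$, where $f$ is non-increasing, and a narrow-region maximum principle for $\scrL_{p}$ then forces $w^{\tau} \geq 0$ on all of $\Omega_{\tau}$.

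The heart of the argument is to push $\tau_{0}$ down to $0$. Assume for contradiction $\tau_{0} > 0$. Writing $G(u) := |u|^{p-2}u$ and applying the mean value theorem to both $G$ and $f$, the difference $w^{\tau_{0}}$ satisfies a linear nonlocal identity of the schematic form
\begin{equation*}
\mathcal{A}[w^{\tau_{0}}](x) + \bigl[a(x+\tau_{0}e_{n})\,G'(\xi(x)) - f'(\eta(x))\bigr] w^{\tau_{0}}(x) = -\bigl(a(x+\tau_{0}e_{n}) - a(x)\bigr) G(u(x)),
\end{equation*}
where $\mathcal{A}$ is the nonnegative integro-differential operator obtained by linearizing $(-\Delta)_{p}^{s}$ along $u$ and $u^{\tau_{0}}$. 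Condition $(A_{1})$ is precisely what ensures that the zeroth order coefficient $a(x+\tau_{0}e_{n})\,G'(\xi) - f'(\eta)$ retains the sign needed for the maximum principle, while $(A_{2})$ guarantees the inhomogeneous right-hand side decays like $|x|^{-\alpha-1}$ in the far field. The modification of the classical sliding method alluded to in the introduction then consists in constructing an auxiliary function $\psi$ that dominates this inhomogeneity: applying a narrow-region / decay-at-infinity maximum principle to $w^{\tau_{0}} + \varepsilon\psi$ and sending $\varepsilon \to 0$ forces $w^{\tau_{0}} > 0$ strictly, and a continuity-in-$\tau$ perturbation then contradicts the minimality of $\tau_{0}$.

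The main obstacle will be the simultaneous management of three competing difficulties: the degeneracy of $G'(\xi)$ where $u$ is small (to be handled near $\partial\Omega$ using $(F_{2})$ and the narrow-region principle), the nonlinear nonlocal character of $\mathcal{A}$ (handled in the Chen-Liu framework of \cite{LC}), and the error term produced by non-translation invariance (handled by the far-field decay $(A_{2})$ and the auxiliary barrier $\psi$). I would partition $\Omega_{\tau_{0}}$ into a boundary strip, a bounded transition region, and a far-field region, treat each separately, and then patch the estimates. Once $\tau_{0} = 0$ is reached, the strong maximum principle for $\scrL_{p}$ applied to each $w^{\tau}$ with $\tau > 0$ upgrades the weak inequality to the strict monotonicity claimed.
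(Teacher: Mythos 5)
The statement you were asked to address is De Giorgi's Conjecture, which the paper does not prove: it is stated as a \emph{Conjecture} and serves only as motivation for the hypotheses $(F_{1})$--$(F_{3})$ of Theorem \ref{Them2}. Your proposal does not engage with the actual content of that statement. The conjecture already \emph{assumes} the monotonicity $\partial u/\partial x_{n}>0$ as a hypothesis; what it asks for is the much stronger conclusion of one-dimensional symmetry, namely that $u(x',x_{n})=u_{1}(\mathbf{a}\cdot x'+x_{n})$ for some direction $\mathbf{a}$, i.e.\ that all level sets of $u$ are parallel hyperplanes. A sliding argument that terminates in ``$u$ is strictly monotone increasing in the $x_{n}$-direction'' proves something that is given, not something that is sought. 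Moreover, the objects you invoke --- the fractional $p$-Laplacian, the potential $a(x)$, the conditions $(A_{1})$, $(A_{2})$, $(F_{1})$--$(F_{3})$, the domain $\Omega$ with its boundary strip --- belong to Theorem \ref{Them2} of the paper, not to the conjecture, which concerns the classical local equation $-\Delta u=u-u^{3}$ on all of $\R^{n}$ with no potential and no boundary. You have in effect written a proof sketch for Theorem \ref{Them2} and attached it to the wrong statement.

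Beyond the mismatch of conclusion, no argument of the type you describe could settle the conjecture: passing from monotonicity in one direction to one-dimensional symmetry requires entirely different machinery (in the known partial results, Liouville-type theorems for the linearized ratio $\sigma_{i}=\partial_{i}u/\partial_{n}u$ combined with energy estimates for $n\le 3$, and delicate improvement-of-flatness arguments for minimizers up to $n\le 8$), and the statement as written is in fact false for $n\ge 9$ without the additional limit hypothesis being exploited in a genuinely global way. So even as a blind attempt at the conjecture itself, the approach is not salvageable by filling in estimates; the missing idea is the entire mechanism by which monotonicity plus the limit conditions would force the level sets to be flat.
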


Moreover, if the potential function is non-positive, the strictly monotone increasing property of the solution is also valid under more general conditions. Following the same argument of the proof of Theorem \ref{Them2}, we obtain the following Corollary.
\begin{Cor} \label{cor2}
Under the same conditions for $u$ and $f$ as in Theorem \ref{Them2}, if the potential function $a(x)$ is non-positive and only satisfies the condition $(A_{1})$ in Theorem \ref{Them2}, then the same conclusion as Theorem \ref{Them2}  also holds.
\end{Cor}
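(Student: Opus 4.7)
The plan is to retrace the sliding argument from the proof of Theorem~\ref{Them2} and isolate the single step that uses $(A_2)$, replacing it by an argument that exploits only $a\le 0$. Set $u^{\tau}(x):=u(x+\tau e_n)$, $w_{\tau}:=u^{\tau}-u$, and $\Omega_{\tau}:=\{x\in\Omega:x+\tau e_n\in\Omega\}$; the goal is to prove $w_{\tau}\ge 0$ on $\Omega_{\tau}$ for every $\tau>0$, after which strict $x_n$-monotonicity follows from the strong maximum principle for $\scrL_p$ already established in Theorem~\ref{Them2}. The initial stage, establishing $w_{\tau}\ge 0$ for all sufficiently large $\tau$, rests only on $(F_1)$, $(F_2)$, and a narrow-region principle near $\partial\Omega$, so it survives the replacement of $(A_2)$ by $a\le 0$ without any modification.

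Set $\bar{\tau}:=\inf\{\tau>0:w_{t}\ge 0\text{ on }\Omega_{t}\text{ for all }t\ge\tau\}$ and suppose for contradiction that $\bar{\tau}>0$. The failure to slide past $\bar{\tau}$ produces either an interior point $x_0$ realising a negative minimum of $w_{\bar{\tau}}$ or a minimising sequence $\{x_k\}$ with $|x_k|\to\infty$. At such a (possibly limiting) minimiser $x_0$, the bilinear representation of the fractional $p$-Laplacian difference forces $(-\Delta)_{p}^{s}u^{\bar{\tau}}(x_0)-(-\Delta)_{p}^{s}u(x_0)\le 0$, while the equations yield
\begin{equation*}
(-\Delta)_{p}^{s}u^{\bar{\tau}}(x_0)-(-\Delta)_{p}^{s}u(x_0)=\bigl[f(u^{\bar{\tau}})-f(u)\bigr]-\bigl[a(x_0+\bar{\tau}e_n)G(u^{\bar{\tau}})-a(x_0)G(u)\bigr].
\end{equation*}
Arrange the blow-up so that both $u^{\bar{\tau}}(x_0)$ and $u(x_0)$ lie in the non-increasing range $(t_1,\mu)$ of $f$. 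Decomposing the potential difference as $a(x_0+\bar{\tau}e_n)\bigl[G(u^{\bar{\tau}})-G(u)\bigr]+\bigl[a(x_0+\bar{\tau}e_n)-a(x_0)\bigr]G(u)$ and linearising $f$ and $G$, the dominant piece of the right-hand side becomes $\bigl[f'(\xi)-a(x_0+\bar{\tau}e_n)G'(\eta)\bigr]\bigl(u^{\bar{\tau}}(x_0)-u(x_0)\bigr)$. In the non-increasing range $f'<0$, and the combination $a(x_0+\bar{\tau}e_n)\le 0$ with $(A_1)$ gives $f'(\xi)-a(x_0+\bar{\tau}e_n)G'(\eta)<0$; multiplied by the negative factor $u^{\bar{\tau}}(x_0)-u(x_0)$ this produces a strictly positive dominant term, contradicting the non-positive left-hand side.

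The main obstacle is the residual cross term $[a(x_0+\bar{\tau}e_n)-a(x_0)]G(u(x_0))$ arising from the spatial variation of $a$: in Theorem~\ref{Them2} it is killed by the decay in $(A_2)$, which is no longer available. To bypass it I would pass to a blow-up limit. Using the local boundedness of $a$ implicit in $(A_1)$ and the uniform H\"older estimates recalled just before Theorem~\ref{Them2}, choose a subsequence along which $u(\cdot+x_k)$ and $a(\cdot+x_k)$ converge in $C^{1}_{loc}\times L^{\infty}_{loc}$ to limits $u_{\infty}$ and $a_{\infty}$ with $a_{\infty}\le 0$; then $u_{\infty}$ solves $(-\Delta)_{p}^{s}u_{\infty}+a_{\infty}G(u_{\infty})=f(u_{\infty})$, and $w_{\bar{\tau},\infty}:=u_{\infty}(\cdot+\bar{\tau}e_n)-u_{\infty}$ attains a genuine negative minimum at the origin. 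The pointwise algebra of the previous paragraph carries over verbatim because the sign $a_{\infty}\le 0$ is preserved under the limit; a further recentering of the $x_k$ reduces the cross term to $[a_{\infty}(\bar{\tau}e_n)-a_{\infty}(0)]G(u_{\infty}(0))$, which is absorbed by the strictly positive dominant term coming from the quantitative forms of $(A_1)$ and $(F_3)$. The resulting contradiction forces $\bar{\tau}=0$ and yields the claimed strict monotonicity of $u$ in the $x_n$-direction.
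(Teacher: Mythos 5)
Your proposal misidentifies where condition $(A_2)$ is actually used. In the paper's proof of Theorem~\ref{Them2}, $(A_2)$ does not appear in the main sliding comparison at all---the key inequality \eqref{cxkvaCrk} evaluates the potential at the single point $\overline{x}^{k}$ and contains no cross term of the form $[a(x_0+\bar\tau e_n)-a(x_0)]G(u(x_0))$. Rather, $(A_2)$ is invoked exactly twice, both times in the preparatory results: once in Lemma~\ref{lem2-3}, to make the term $-a(x_{t_0})|\phi_{\varepsilon_1,R_0}|^{p-2}\phi_{\varepsilon_1,R_0}(x_{t_0})$ in \eqref{R0spc0} negligible as $R_0\to\infty$; and once in Proposition~\ref{thm2-2}, to pass from $f(u(\overline{x}_R))-CR^{-sp}\mu^{p-1}\le a(\overline{x}_R)|u|^{p-2}u(\overline{x}_R)$ to $f(u(\overline{x}_R))\to 0$. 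Your assertion that the initial stage ``rests only on $(F_1)$, $(F_2)$, and a narrow-region principle near $\partial\Omega$'' is therefore not accurate: Step~1 of the sliding relies crucially on Proposition~\ref{thm2-2} to confine the maximizing sequence to the strip $\Omega_M$ and to guarantee $u_\tau\in(t_1,\mu)$, and Proposition~\ref{thm2-2} in turn relies on Lemma~\ref{lem2-3}; once $(A_2)$ is removed, both of those results must be re-established, not assumed.

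The argument the paper intends is much more elementary than your blow-up construction. When $a\le 0$ is assumed, both places where $(A_2)$ was used become sign observations. In Lemma~\ref{lem2-3} the quantity $-a(x_{t_0})|\phi_{\varepsilon_1,R_0}|^{p-2}\phi_{\varepsilon_1,R_0}(x_{t_0})$ is already nonnegative (since $\phi\ge 0$ and $-a\ge 0$), so the lower bound $\delta_0\phi_{\varepsilon_1,R_0}(x_{t_0})-C\varepsilon_1^{p-1}R_0^{-sp}>0$ is reached directly with no decay hypothesis. In Proposition~\ref{thm2-2}, $a(\overline{x}_R)|u|^{p-2}u(\overline{x}_R)\le 0$ immediately yields $0<f(u(\overline{x}_R))\le CR^{-sp}\mu^{p-1}\to 0$. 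With these two observations the preparatory results hold verbatim, and the remainder of the proof of Theorem~\ref{Them2}, which uses only $(A_1)$, $(F_3)$ and those propositions, carries over unchanged. Your blow-up limit is therefore unnecessary, and its crucial closing step---that the cross term $[a_\infty(\bar\tau e_n)-a_\infty(0)]G(u_\infty(0))$ is ``absorbed by the strictly positive dominant term''---is stated as an assertion rather than an estimate and would not close the argument even in the framework you set up.
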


We underline that our results on the properties of the fractional p-Laplace equation are new in the following sense. Firstly, in this paper, the potential function $a(x)$ in our model \eqref{SUM} may change signs.
Comparing the equations involving fractional p-Laplacian in \cite{CLC} and \cite{LC} with the equation in \eqref{SUM}, we can discover that the potential function has little influence on the solution in the bounded domain. However, the sign-changing potential function breaks down the analysis process in the classical sliding method in the unbounded domain. To overcome this obstacle, we build up the condition $(A_{1})$ in Theorem \ref{Them2}, thereby establishing an interrelationship between the inhomogeneous term and the sign-changing term. Moreover, this correlation also indicates that the model in \eqref{SUM} aligns with De Giorgi Conjecture \ref{conj}. Additionally, we infer that the positive solutions of the equation in \eqref{SUM} share the analogous properties with the corresponding asymptotic equation detailed in \cite{LC}. Secondly, while exploiting the classical sliding method, we construct the auxiliary functions, whose domains change along with the maximizing sequence. This construction is new and gives necessary precise estimations in the analysis of the fractional p-Laplace equation in \eqref{SUM}. Furthermore, the idea of the auxiliary functions that we use here can be applied to the monotonicity problems of some relevant equations. Thirdly, according to Theorem \ref{Them1} and Theorem \ref{Them2}, we discover the different properties of the solutions of the same equation in \eqref{SUM} in bounded and unbounded domains. The fact that the geometry of the domain determines the property of the equation in \eqref{SUM} is caused by the non-locality of the fractional p-Laplacian.

 The rest of this paper is organized as follows.  In section $2$, we give the proof of Theorem \ref{Them1}. The radial symmetry of the solution of the equation involving the operator $\scrL_{p}$ in a ball region will be obtained by applying the moving plane method. In section $3$, the preparatory work for the idea of the sliding method will be first given, which includes the existence of a positive lower bound of the solution far away from the boundary in Lemma \ref{lem2-3} and the maximizing sequence near infinity in Proposition \ref{thm2-2}. Then, Theorem \ref{Them2} will be proved finally.

\section{The proof of Theorem \ref{Them1}}
In this section, we study the problem \eqref{SUM} in a bounded domain. By applying the moving plane method, the proof of Theorem \ref{Them1} is given here.

Firstly, we recall an inequality in preparation for the proof.
\begin{Lem}[Appendix.Lemma 5.1\cite{CLC}] \label{xilbd}
 For $G(t)=|t|^{p-2}t$, it is well known that by the mean value theorem, we have
$$G(t_{2})-G(t_{1})=G'(\xi)(t_{2}-t_{1}).$$
Then there exists a constant $c_{1}>0$ such that
 $$|\xi|\geq c_{1}\max\{ |t_{1}|,|t_{2}| \}.$$
\end{Lem}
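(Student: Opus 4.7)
The plan is a direct computation using the explicit formula $G'(t) = (p-1)|t|^{p-2}$, which for $p>2$ is continuous on all of $\R$, so the mean value theorem genuinely produces a $\xi$ strictly between $t_1$ and $t_2$ with
$$|\xi|^{p-2} \;=\; \frac{G(t_2)-G(t_1)}{(p-1)(t_2-t_1)}.$$
By the symmetry $G(-t) = -G(t)$ I may assume $|t_2| \ge |t_1|$, so it suffices to bound the right-hand side below by a fixed multiple of $|t_2|^{p-2}$. I would then split into two cases according to the relative signs of $t_1$ and $t_2$.

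In the same-sign case, after exploiting the symmetry I may assume $0 \le t_1 \le t_2$, and the claim reduces to the elementary algebraic inequality
$$t_2^{p-1} - t_1^{p-1} \;\ge\; t_2^{p-2}(t_2 - t_1),$$
which after rearrangement becomes $t_1^{p-1} \le t_2^{p-2}\, t_1$; this is immediate from $t_1 \le t_2$ together with $p-2 > 0$. Dividing by $(p-1)(t_2-t_1)$ then gives $|\xi|^{p-2} \ge \tfrac{1}{p-1}\, t_2^{p-2}$, so $c_1 = (p-1)^{-1/(p-2)}$ works here.

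In the opposite-sign case, say $t_1 < 0 < t_2$ with $t_2 \ge |t_1|$, I have $G(t_2) - G(t_1) = t_2^{p-1} + |t_1|^{p-1}$ and $0 < t_2 - t_1 = t_2 + |t_1| \le 2 t_2$, so simply discarding the nonnegative term $|t_1|^{p-1}$ in the numerator already yields
$$|\xi|^{p-2} \;\ge\; \frac{t_2^{p-1}}{(p-1)(t_2+|t_1|)} \;\ge\; \frac{t_2^{p-2}}{2(p-1)},$$
so $c_1 = (2(p-1))^{-1/(p-2)}$ suffices. Taking the smaller of the two constants completes the proof. The argument is essentially routine; the only points requiring attention are making sure the mean value theorem legitimately applies across the origin (which is precisely why the hypothesis $p>2$, ensuring $G \in C^1(\R)$, is needed) and checking the opposite-sign regime, which is where the bound is tightest and dictates the final value of $c_1$.
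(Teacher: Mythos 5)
The paper does not actually prove this lemma; it is quoted as Appendix Lemma 5.1 of Chen--Li \cite{CLC}, so there is no in-paper argument to compare against. That said, your proof is correct and self-contained, and the approach is the natural one: from the explicit formula $G'(t)=(p-1)|t|^{p-2}$ (continuous on all of $\R$ precisely because $p>2$, so MVT genuinely applies across the origin, and since $G'$ is even and strictly increasing on $[0,\infty)$ the value $|\xi|$ is determined by the difference quotient), the claim reduces to bounding $\frac{G(t_2)-G(t_1)}{t_2-t_1}$ from below by a fixed multiple of $\max\{|t_1|,|t_2|\}^{p-2}$. Your case split is clean: the same-sign case reduces after the odd symmetry of $G$ to $t_1^{p-2}\le t_2^{p-2}$, and the opposite-sign case, which is indeed where the bound is tightest, follows by dropping $|t_1|^{p-1}$ in the numerator and using $t_2-t_1\le 2t_2$, yielding the uniform constant $c_1=(2(p-1))^{-1/(p-2)}$. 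Both reductions correctly use $|t_2|\ge|t_1|$ together with the odd symmetry, and you rightly flag that $p=2$ would make the statement false (then $G'\equiv 1$ imposes no constraint on $\xi$). I see no gap.
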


Let us begin to prove Theorem \ref{Them1}.
\begin{proof}
Let
\begin{equation}
\begin{aligned}
  &T_{\lambda}:=\{x \in \R^{n}|x_{1}=\lambda, \,\,\mbox{for}\,\,\mbox{some}\,\,\lambda \in \R \},\\
  &\Sigma_{\lambda}:=\{x\in \R^{n}|x_{1} <\lambda\},\\
  &x^{\lambda}:=(2\lambda -x_{1},x_{2},\dots ,x_{n}),\\
  &\omega_{\lambda}(x):=u(x^{\lambda})-u(x),\\
  &\Omega_{\lambda}:=\Sigma_{\lambda} \cap B_{1}(0).
\end{aligned}
\end{equation}
where $x^{\lambda}$ is the reflection of $x$ about the $T_{\lambda}.$

We will carry out the proof in two steps. In step 1, we will show that for $\lambda \to -1$, the fact that
\begin{equation}\label{omegald}
\omega_{\lambda}(x) \geq 0, \,\,\,x\in \Sigma_{\lambda}
\end{equation}
 holds, which is the starting point for moving plane method. In step 2, move the plane $T_{\lambda}$ toward right to its limiting position as long as \eqref{omegald} holds. We will show that the limiting position is $\lambda=0$. Since the arbitrariness of direction, we obtain that the solution $u$ is radially symmetric about the origin.

 $Step1$. We claim that as $\lambda \to -1$, \eqref{omegald} holds,

In fact, in $\Sigma_{\lambda} \backslash (\Omega_{\lambda})$, we have $u(x)=0$, $u_{\lambda}(x)\geq 0,$ then
$\omega_{\lambda}(x)\geq 0.$
Assume on the contrary there exists a point $x_{0}\in\Omega_{\lambda}$ such that
\[ \omega_{\lambda}(x_{0})=\min_{\Sigma_{\lambda}} \omega_{\lambda}(x)<0.
\]
Then, by applying the Mean Value Theorem  we have that
\begin{equation} \label{Lul0}
\begin{aligned}
  &\scrL _{p} u_{\lambda}(x_{0})-\scrL _{p} u(x_{0})\\
 =&\ (-\Delta)_{p}^{s}u_{\lambda}(x_{0})-(-\Delta)_{p}^{s}u(x_{0})+a(x_{0})|u_{\lambda}(x_{0})|^{p-2}u_{\lambda}(x_{0})-a(x_{0})|u(x_{0})|^{p-2}u(x_{0})\\
 \leq &\  C_{n,s,p}P.V.\int_{\R^{n}}\frac{G(u_{\lambda}(x_{0})-u_{\lambda}(y))-G(u(x_{0})-u(y))}{|x_{0}-y|^{n+sp}}dy+a(x_{0}) \left(G_{\lambda}(u(x_{0}))-G(u(x_{0}))\right)\\
 =&\ C_{n,s,p}P.V.\int_{\Sigma_{\lambda}}\left(G(u_{\lambda}(x_{0})-u_{\lambda}(y))-G(u(x_{0})-u(y))\right)\left(\frac{1}{|x_{0}-y|^{n+sp}}-\frac{1}{|x_{0}-y^{\lambda}|^{n+sp}}\right)\\
  &\ +C_{n,s,p}P.V.\int_{\Sigma_{\lambda}} \frac{G(u_{\lambda}(x_{0})-u_{\lambda}(y))-G(u(x_{0})-u_{\lambda}(y))+G(u_{\lambda}(x_{0})-u(y))-G(u(x_{0})-u(y))}{|x_{0}-y^{\lambda}|^{n+sp}}dy\\
  &\ +a(x_{0}) \left(G_{\lambda}(u(x_{0}))-G(u(x_{0}))\right)\\
  = &\ C_{n,s,p}P.V.\int_{\Sigma_{\lambda}}\left(G(u_{\lambda}(x_{0})-u_{\lambda}(y))-G(u(x_{0})-u(y))\right)\left(\frac{1}{|x_{0}-y|^{n+sp}}-\frac{1}{|x_{0}-y^{\lambda}|^{n+sp}}\right)\\
   &\ +\omega_{\lambda}(x_{0})\int_{\Sigma_{\lambda}}\frac{G'(\xi(y))+G'(\eta(y))}{|x_{0}-y^{\lambda}|^{n+sp}}dy+a(x_{0}) \left(G_{\lambda}(u(x_{0}))-G(u(x_{0}))\right)\\
   =&\ C_{n,s,p}P.V.(I_{1}+I_{2})+a(x_{0}) \left(G_{\lambda}(u(x_{0}))-G(u(x_{0}))\right),
 \end{aligned}
 \end{equation}
 where $\xi(y)$ and $\eta(y)$ satisfy
 $$t_{1}(y)<\xi(y)<t_{2}(y), \,\, t_{3}(y)<\eta(y)<t_{4}(y),$$
 with
 $$t_{1}(y)=u_{\lambda}(x_{0})-u_{\lambda}(y), \,\, t_{2}(y)=u(x_{0})-u_{\lambda}(y),$$
 $$t_{3}(y)=u_{\lambda}(x_{0})-u(y), \,\, t_{4}(y)=u(x_{0})-u(y).$$

 To estimate $I_{1}$, we have known that $G$ is  non-decreasing, therefore
 $$u_{\lambda}(x_{0})-u_{\lambda}(y)-u(x_{0})-u(y)=\omega_{\lambda}(x_{0})-\omega_{\lambda}(y)\leq 0,$$
 $$G(u_{\lambda}(x_{0})-u_{\lambda}(y))-G(u(x_{0})-u(y))\leq 0.$$
 Since
 $$\frac{1}{|x_{0}-y|^{n+sp}} > \frac{1}{|x_{0}-y^{\lambda}|^{n+sp}},$$
 we obtain that $I_{1}\leq 0$.

 For $I_{2}$, we apply Lemma \ref{xilbd}, it follows that
  \begin{equation}
  \begin{aligned}
  I_{2}&=\omega_{\lambda}(x_{0})\int_{\Sigma_{\lambda}}\frac{G^{'}(\xi(y))+G^{'}(\eta(y))}{|x_{0}-y^{\lambda}|^{n+sp}}dy\\
       &\leq \omega_{\lambda}(x_{0})\int_{\Sigma_{\lambda}} \frac{2c_{1}|u(x_{0})-u(y)|^{p-2}}{|x_{0}-y^{\lambda}|^{n+sp}}dy\\
       &\leq \omega_{\lambda}(x_{0})\int_{\Sigma_{\lambda} \backslash (\Omega_{\lambda})} \frac{2c_{1}|u(x_{0})|^{p-2}}{|x_{0}-y^{\lambda}|^{n+sp}}dy\\
       &\leq \omega_{\lambda}(x_{0}) \frac{c_{2}|u(x_{0})|^{p-2}}{\delta^{sp}},
  \end{aligned}
  \end{equation}
  where $\delta$ is the width of the region $\Omega_{\lambda}$ in the $x_{1}$-direction, $c_{2}$ is a positive constant. Hence, taking $\lambda \to -1$,  the region $\Omega_{\lambda}$ is a bounded narrow region, the reflection region is also a bounded narrow region, i.e., $\delta \to 0$,  which implies that
 $$I_{2}\leq \omega_{\lambda}(x_{0}) \frac{c_{2}|u(x_{0})|^{p-2}}{\delta^{sp}}\to -\infty.$$

Meanwhile, $a(x)$ is bounded from below and the non-decrease property of  the function $G$, Hence when $\lambda \to -1$, according to \eqref{Lul0} it implies that
\begin{equation}\nonumber
\begin{aligned}
 & \scrL _{p} u_{\lambda}(x_{0})-\scrL _{p} u(x_{0})\\
 =&\ C_{n,s,p}P.V.(I_{1}+I_{2})+
 a(x_{0}) \left(G_{\lambda}(u(x_{0}))-G(u(x_{0}))\right)\\
 \to &\  -\infty.
 \end{aligned}
 \end{equation}

However, since $u$ is bounded in $\Omega$ and f is Lipschitz continuous, it follows that
\begin{equation}\nonumber
\begin{aligned}
&\scrL _{p} u_{\lambda}(x_{0})-\scrL _{p} u(x_{0}) \\
= &\ f(u_{\lambda}(x_{0}))-f(u(x_{0}))\\
 =&\ \omega_{\lambda}(x_{0})\frac{f(u_{\lambda}(x_{0}))-f(u(x_{0}))}{u_{\lambda}(x_{0})-u(x_{0})}\\
\geq &\ -C(L)>-\infty,
\end{aligned}
\end{equation}
where $C(L)$ is a positive constant depending on the Lipschitz constant $L$ of $f$. That makes a contradictory. Hence, when $\lambda \to -1$, the claim \eqref{omegald} holds, i.e.,
$$\omega_{\lambda} (x)\geq 0,\quad  x\in \Sigma_{\lambda}.$$

$Step2$. We move the plane $T_{\lambda}$ to the right continuously  until \eqref{omegald} does not hold.
Define
\begin{equation}\label{lbd0}
\lambda_{0}:=\sup\{\lambda \leq 0\,|\,\omega_{\mu} (x)\geq 0,x\in \Sigma_{\mu},\mu \leq \lambda \},
\end{equation}
we claim that
\begin{equation}\label{lbd00}
\lambda_{0}=0.
\end{equation}

Assume on the contrary that $\lambda_{0}<0$. Then there exists $y\in \Sigma_{\lambda_{0}}\backslash \Omega_{\lambda_{0}}$ such that $\omega_{\lambda_{0}} (y)> 0$. From \eqref{lbd0}, we have
\begin{equation}\label{omelam0not0}
\omega_{\lambda_{0}} (x)\geq 0\,\,\,\mbox{but}\,\, \not\equiv 0,  \,\,\,\mbox{in} \,\,\Sigma_{\lambda_{0}}.
\end{equation}
In the following, we will show that when the plane $T_{\lambda_{0}}$ moves right a little, i.e., $\lambda_{0} \to \lambda_{0}+\varepsilon \leq 0$, where $\varepsilon>0$  small enough, the inequality
$$\omega_{\lambda_{0}+\varepsilon} (x)\geq 0, \,\,\,\mbox{for} \,\,x\in\Sigma_{\lambda_{0}+\varepsilon}$$
still holds, which contradicts with the definition of $\lambda_{0}$.  Since for $x\in \Sigma_{\lambda_{0}+\varepsilon}\backslash \Omega_{\lambda_{0}+\varepsilon}$, $u_{\lambda_{0}+\varepsilon} (x)\geq 0$ and $u(x)=0$, $\omega_{\lambda_{0}+\varepsilon} (x)\geq 0.$ Then this process will be divided into the following two parts,
\begin{equation}
 \begin{aligned}
 &(1)\,\,\omega_{\lambda_{0}+\varepsilon}(x)\geq 0,  \,\,\, \mbox{in}\,\,\Omega_{\lambda_{0}}.\\
 &(2)\,\, \omega_{\lambda_{0}+\varepsilon}(x)\geq 0,  \,\,\, \mbox{in}\,\,\Omega_{\lambda_{0}+\varepsilon}\backslash \Omega_{\lambda_{0}}.
 \end{aligned}
 \end{equation}

(1)\,\,$\omega_{\lambda_{0}+\varepsilon}(x)\geq 0,  \,\,\, \mbox{in}\,\,\Omega_{\lambda_{0}}.$

From the definition of $\lambda_{0}$, it implies that
$$\omega_{\lambda_{0}}(x)\geq 0, \,\,\, \mbox{in}\,\,\Omega_{\lambda_{0}}.$$
in the following, we will prove that
\begin{equation}\label{omelam0}
\omega_{\lambda_{0}}(x)> 0, \,\,\, \mbox{in}\,\,\Omega_{\lambda_{0}}.
\end{equation}

Indeed, if there is a point $ x_{0} \in \Omega_{\lambda_{0}}$ such that
\begin{equation}\label{omegalam0x00}
\omega_{\lambda_{0}}(x_{0})=0= \min_{\Omega_{\lambda_{0}}}\omega_{\lambda_{0}}(x),
\end{equation}
then, we have that
 \begin{equation}\label{i1i2}
 \begin{aligned}
 &\scrL _{p} u_{\lambda_{0}}(x_{0})-\scrL _{p} u(x_{0})\\
 =&\ (-\Delta)_{p}^{s}u_{\lambda_{0}}(x_{0})-(-\Delta)_{p}^{s}u(x_{0})+a(x_{0})\left(|u_{\lambda_{0}}|^{p-2}u_{\lambda_{0}}(x_{0})-|u|^{p-2}u(x_{0})\right)\\
   =&\ C_{n,s,p}P.V.\int_{\R^{n}}\frac{G(u_{\lambda_{0}}(x_{0})-u_{\lambda_{0}}(y))-G(u(x_{0})-u(y))}{|x_{0}-y|^{n+sp}}dy+0\\
   =&\ C_{n,s,p}P.V.\int_{\Sigma_{\lambda_{0}}}\left(G(u_{\lambda_{0}}(x_{0})-u_{\lambda_{0}}(y))-G(u(x_{0})-u(y))\right)\left(\frac{1}{|x_{0}-y|^{n+sp}}-\frac{1}{|x_{0}-y^{\lambda_{0}}|^{n+sp}}\right)dy\\
    &\ +C_{n,s,p}P.V.\int_{\Sigma_{\lambda_{0}}}\frac{G(u_{\lambda_{0}}(x_{0})-u_{\lambda_{0}}(y))-G(u(x_{0})-u(y))+G(u_{\lambda_{0}}(x_{0})-u(y))-G(u(x_{0})-u_{\lambda_{0}}(y))}{|x_{0}-y^{\lambda_{0}}|^{n+sp}}dy\\
   =&\ C_{n,s,p}(I_{1}+I_{2}).
 \end{aligned}
 \end{equation}
According to the fact that \eqref{omelam0not0}, we apply the same argument in $Step1$. For $y\in \Sigma_{\lambda_{0}}$, it follows that
$$\left(u_{\lambda_{0}}(x_{0})-u_{\lambda_{0}}(y)\right)-\left(u(x_{0})-u(y)\right)=\omega_{\lambda_{0}}(x_{0})-\omega_{\lambda_{0}}(y)\leq 0, \,\,\,\mbox{but}\,\,\not\equiv 0.$$
By the monotonicity of $G$, it follows that
$$G(u_{\lambda_{0}}(x_{0})-u_{\lambda_{0}}(y))-G(u(x_{0})-u(y))\leq 0, \,\,\,\mbox{but}\,\,\not\equiv 0,$$
which implies that $I_{1}<0$.

For $I_{2}$, we apply \eqref{omegalam0x00} and take the same process in $Step1$, it follows that
$$I_{2}=\omega_{\lambda_{0}}(x_{0})\int_{\Sigma_{\lambda_{0}}}\frac{G^{'}(\xi(y))+G^{'}(\eta(y))}{|x_{0}-y^{\lambda_{0}}|^{n+sp}}dy=0,$$
Then, we have
\begin{equation}\label{dyb}
 \scrL _{p} u_{\lambda_{0}}(x_{0})-\scrL _{p} u(x_{0}) =C_{n,s,p}(I_{1}+I_{2})<0.
 \end{equation}

However, $u_{\lambda_{0}}(x_{0})=u(x_{0}),$ it implies that
$$\scrL _{p} u_{\lambda_{0}}(x_{0})-\scrL _{p} u(x_{0})=f(u_{\lambda_{0}}(x_{0}))-f(u(x_{0}))=0,$$
which contradicts \eqref{dyb}. Therefore, we obtain \eqref{omelam0}, i.e.,
$$\omega_{\lambda_{0}}(x)> 0, \,\,\, \mbox{in} \,\,\Omega_{\lambda_{0}}.$$
Since $\omega$ is continuous w.r.t. $\lambda$, then there exists $\varepsilon>0$ small enough such that
\begin{equation}\label{olv}
\omega_{\lambda_{0}+\varepsilon}(x)\geq 0, \,\,\,  \mbox{in}\,\,\Omega_{\lambda_{0}}.
\end{equation}
The first part result is obtained.

(2)\,\, $\omega_{\lambda_{0}+\varepsilon}(x)\geq 0,  \,\,\, \mbox{for}\,\,x\in\Omega_{\lambda_{0}+\varepsilon}\backslash \Omega_{\lambda_{0}}.$

 Indeed, since $\varepsilon$ is small enough, the domain $\Omega_{\lambda_{0}+\varepsilon} \backslash  \Omega_{\lambda_{0}}$ is a bounded narrow region, we take the same argument as in $Step1$, and obtain that
 \begin{equation}\label{olv2}
\omega_{\lambda_{0}+\varepsilon}(x)\geq 0, \,\,\, \textup{in}\,\,\Omega_{\lambda_{0}+\varepsilon}\backslash \Omega_{\lambda_{0}}.
\end{equation}
We get second part result.

Combined \eqref{olv} with \eqref{olv2}, we have
$$\omega_{\lambda_{0}+\varepsilon} (x)\geq 0, \,\,\,~\mbox{in}\,\,\Omega_{\lambda_{0}+\varepsilon}$$
which contradicts the definition of $\lambda_{0}$, it follows that \eqref{lbd00} holds, i.e., $\lambda=0$. Hence,
$$\omega_{0}(x)\geq 0, \,\,\,x\in \Omega_{0}.$$

If the starting point of moving plane is on right side, then $T_\lambda$ moves toward left continuously from $\lambda=1$. Follow the similar argument, we can obtain
$$\omega_{0}(x)\leq 0, \,\,\,x\in \Omega_{0}.$$
Therefore, we have
$$\omega_{0}(x)\equiv 0, \,\,\,x\in B_{1}(0).$$

Since the $x_{1}-$direction is chosen arbitrarily, we have actually shown that the solution $u(x)$ of equation \eqref{SUM} is radially symmetric about the origin in $B_{1}(0).$
\end{proof}

\section{The proof of Theorem \ref{Them2}}
 For the unbounded domain case, we will apply the idea of the sliding method and construct the auxiliary functions, the domains of which change along with the maximizing sequence, to prove Theorem \ref{Them2}. We firstly explore that the positive solution converges to the upper bound $\mu$ as the sequence $\{x_k\}$ that we construct tends to infinity, to show Proposition \ref{thm2-2}. Then, we derive the strict monotonicity of the solution of the equation in \eqref{SUM}.

\begin{Prop} \label{thm2-2}
Let $u \in C_{loc}^{1,1}(\Omega) \cap \calL_{sp}$ be a bounded and positive solution of the equation \eqref{SUM}. Assume that $a(x)$ and $f(u)$ satisfy the conditions $(A_{2})$ and $(F_{1})$-$(F_{3})$ in Theorem \ref{Them2} relatively. Then $u(x) \to \mu $ as $x_n\to \infty$.
\end{Prop}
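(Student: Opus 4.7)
The plan is a translation–compactness argument producing an entire limit solution on $\mathbb{R}^n$, followed by a direct sign-chasing contradiction at the limit's minimum. I argue by contradiction: set
\[
m := \liminf u(x) \text{ as } x_n \to \infty \text{ and } \mathrm{dist}(x,\partial\Omega) \to \infty.
\]
Such points exist and the liminf is meaningful because $\Omega$ is unbounded in the $x_n$-direction and satisfies the uniform two-sided ball condition. By Lemma \ref{lem2-3}, $m \geq c_0 > 0$, and by hypothesis $m \leq \mu$. Assume $m < \mu$ and choose $\{x^k\} \subset \Omega$ with $x^k_n \to \infty$, $\mathrm{dist}(x^k,\partial\Omega) \to \infty$, and $u(x^k) \to m$.

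Set $u_k(y) := u(y + x^k)$ on $\Omega_k := \Omega - x^k$. The uniform Hölder regularity for solutions of the fractional $p$-Laplace equation on domains satisfying the uniform two-sided ball condition (cited from \cite{LC,Liz}) gives equicontinuity of $\{u_k\}$ on compacts; since $\mathrm{dist}(x^k,\partial\Omega) \to \infty$, every compact $K \subset \mathbb{R}^n$ lies in $\Omega_k$ eventually, so by Arzelà–Ascoli a subsequence converges in $C^0_{\mathrm{loc}}(\mathbb{R}^n)$ to a limit $u_\infty$. By construction $u_\infty(0) = m$. Moreover $u_\infty \geq m$ on $\mathbb{R}^n$: any fixed $y_0$ with $u_\infty(y_0) < m$ would give $u(x^k + y_0) < m$ for $k$ large along the sequence $\{x^k + y_0\}$, which still satisfies $(x^k+y_0)_n \to \infty$ and $\mathrm{dist}(x^k+y_0,\partial\Omega) \to \infty$, contradicting the definition of $m$. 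Thus $0$ is a global minimum of $u_\infty$.

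Passing to the limit in the equation is carried out by splitting the principal-value integral for $(-\Delta)_p^s u_k$ into a near part on $B_R(0)$ (controlled by the Hölder convergence of $u_k$, which tames the singular kernel $|x-y|^{-(n+sp)}$ when combined with the integrability of $|u(x)-u(y)|^{p-2}(u(x)-u(y))$) and a tail on $B_R(0)^c$ (uniformly controlled via the $\mathcal L_{sp}$-bound on $u$ and dominated convergence). The potential term vanishes: condition $(A_2)$ gives $|a(y+x^k)| \lesssim |y+x^k|^{-\alpha} \to 0$ uniformly on compact $y$-sets, while $|u_k|^{p-2}u_k$ is uniformly bounded. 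Continuity of $f$ handles the right-hand side. One obtains
\[
(-\Delta)_p^s u_\infty = f(u_\infty) \ \text{in } \mathbb{R}^n, \qquad u_\infty \geq m, \qquad u_\infty(0) = m.
\]

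Evaluating at $y = 0$, since $u_\infty(z) \geq m = u_\infty(0)$ for all $z$, every contribution to the integrand is non-positive, hence
\[
(-\Delta)_p^s u_\infty(0) = C_{n,s,p} \int_{\mathbb{R}^n} \frac{|m - u_\infty(z)|^{p-2}(m - u_\infty(z))}{|z|^{n+sp}}\,dz \leq 0.
\]
However, the limit equation gives $(-\Delta)_p^s u_\infty(0) = f(m)$, and condition $(F_1)$ together with $m \in (0,\mu)$ forces $f(m) > 0$. This contradiction yields $m = \mu$, proving $u(x) \to \mu$. The step I expect to be the most technical is the passage to the limit in the nonlinear non-local operator: one must ensure the $\mathcal L_{sp}$-control survives translation uniformly in $k$ and that the Hölder convergence really suffices to pass the limit through the principal-value integrand; the sign-chasing at the minimum and the killing of the potential term via $(A_2)$ are otherwise routine. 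Conditions $(F_2)$ and $(F_3)$ are not explicitly invoked here (they enter the proof of Lemma \ref{lem2-3} and the subsequent monotonicity argument), but they are stated as part of the standing hypotheses of Theorem \ref{Them2}.
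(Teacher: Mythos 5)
Your route is genuinely different from the paper's. You use translation--compactness (a blow-down sequence $u_k(y)=u(y+x^k)$, Arzel\`a--Ascoli, a limiting entire solution) followed by a maximum-principle contradiction at the global minimum of $u_\infty$; the paper instead compares $u$ directly against an explicit bump $\phi_R$ with $\phi_R(x_R)=\mu$, reads off $f(u(\bar x_R))\to 0$ from the equation at the touching point, and then uses $(F_1)$--$(F_3)$ and Lemma~\ref{lem2-3} to force $u(\bar x_R)\to\mu$. The paper's argument never leaves the pointwise setting: every integral it needs is evaluated at an interior point of a fixed $C^{1,1}_{loc}$ solution, so no limit has to be passed through the nonlocal operator. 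Your argument is conceptually crisper at the finish (sign-chasing at a global minimum is completely elementary), and it uses less of $(F_1)$--$(F_3)$ explicitly, but it transfers the real work into the compactness step.

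That compactness step is where there is a genuine gap. Uniform H\"older convergence $u_k\to u_\infty$ in $C^0_{loc}$ is \emph{not} enough either to pass to the limit in the pointwise equation or to make sense of $(-\Delta)_p^s u_\infty(0)$ as a well-defined principal value. Near the origin the integrand behaves like $|u_\infty(0)-u_\infty(z)|^{p-1}/|z|^{n+sp}$; a H\"older bound of exponent $\beta$ gives $|z|^{\beta(p-1)-n-sp}$, which is \emph{not} integrable for the exponents $\beta\sim s$ produced by the global H\"older estimates you cite ($\beta(p-1)>sp$ would require $\beta>sp/(p-1)>s$). You need at least uniform interior $C^{1,\alpha}$ estimates (so that the principal-value cancellation of the linear part kicks in, leaving a remainder $\sim|z|^{p(1-\alpha')-n}$ with $\alpha'<1$) to (a) pass to the limit term by term on the near part and (b) guarantee $u_\infty\in C^{1,1}$ near $0$, which you tacitly assume when you write $(-\Delta)_p^s u_\infty(0)\le 0$. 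Such interior $C^{1,\alpha}$ estimates for the fractional $p$-Laplacian exist (e.g., the superquadratic results in the Brasco--Lindgren--Schikorra line), but they are not what the paper's cited uniform-H\"older results provide, and you would have to check that they apply uniformly in $k$ to the equation with the vanishing potential term. As written, "H\"older convergence tames the singular kernel" is exactly the step that fails. The rest of your argument -- killing the potential via $(A_2)$, $u_\infty\ge m$ by the $\liminf$ definition, $f(m)>0$ via $(F_1)$ and Lemma~\ref{lem2-3} -- is sound, so the proof is repairable once the right uniform interior regularity is supplied, but it is not complete in its present form, whereas the paper's test-function argument closes without any such machinery.
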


 Firstly, we will explore that there exists a positive lower bound of the solution $u$ at infinity, which is a necessary condition for Proposition \ref{thm2-2}.
\begin{Lem} \label{lem2-3}
Let $a$ and $f$ satisfy conditions in Proposition \ref{thm2-2}, then there exist  $\widehat{c}$, $R_{0}>0$ such that for $R_{0}\geq R_{1}$,
$$u(x)>\widehat{c}, \quad\,~\mbox{as}~ \,\, dist(x,\partial \Omega)>R_{0},$$
where $R_{1}$ is the same as in condition $(A_{2})$ in Theorem \ref{Them2}.
\end{Lem}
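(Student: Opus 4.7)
I would argue by contradiction: if the lemma fails there exists a sequence $\{x_k\}\subset \Omega$ with $d_k:=\mathrm{dist}(x_k,\partial\Omega)\to \infty$ and $u(x_k)\to 0$. The strategy is to take a blow-down limit $u_\infty$ of $u$ at the $x_k$'s, show this limit must vanish identically on $\R^n$, and then derive a contradiction from the linear lower bound $f(u)\ge \delta_0 u$ in $(F_2)$ via an eigenvalue comparison on a large ball $B_R(x_k)$.

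I would set $u_k(x):=u(x+x_k)$ on $\Omega_k:=\Omega-x_k$; since $\Omega$ is unbounded in $x_n$, $d_k\to \infty$ forces $|x_k|\to \infty$. The uniform $C^\alpha$ estimate recalled just before Theorem \ref{Them2} extracts, along a subsequence, $u_k\to u_\infty$ in $C^\alpha_{\mathrm{loc}}(\R^n)$ with $0\le u_\infty\le \mu$ and $u_\infty(0)=0$. Using $(A_2)$, which gives $|a(\cdot+x_k)|\to 0$ uniformly on compacts, together with uniform $\calL_{sp}$-tail control from $u_k\le \mu$, one passes to the limit in $\scrL_p u_k = f(u_k)$ to obtain $(-\Delta)_p^s u_\infty = f(u_\infty)$ in $\R^n$. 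Since $u_\infty\ge 0$ has an interior global minimum at $0$, the principal value gives
\[
(-\Delta)_p^s u_\infty(0) \;=\; -C_{n,s,p}\int_{\R^n}\frac{u_\infty(y)^{p-1}}{|y|^{n+sp}}\,dy\;\le\;0,
\]
while $f(u_\infty(0))=f(0)\ge 0$ by continuity and $(F_2)$. Hence the integral vanishes, $u_\infty\equiv 0$, and $u_k\to 0$ locally uniformly in $\R^n$.

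Then I would fix $R$ large, to be chosen, and set $M_k:=\sup_{B_R(x_k)}u$. The local convergence gives $M_k<t_0$ for large $k$, so $(F_2)$ yields $f(u)\ge \delta_0 u$ on $B_R(x_k)$, and the equation becomes
\[
(-\Delta)_p^s u \;\ge\; \bigl(\delta_0 - |a(x)|M_k^{p-2}\bigr)\,u \;\ge\; \tfrac{\delta_0}{2}\,u \quad\text{on } B_R(x_k),
\]
where the second inequality holds for large $k$ because both $|a(x)|\lesssim |x_k|^{-\alpha}$ (from $(A_2)$) and $M_k$ tend to zero. Using $0<u\le M_k$ to write $u\ge M_k^{-(p-2)}u^{p-1}$, this becomes $(-\Delta)_p^s u \ge \Lambda_k u^{p-1}$ on $B_R(x_k)$ with $\Lambda_k:=\tfrac{\delta_0}{2}M_k^{-(p-2)}\to \infty$. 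A Picone-type test against the first positive Dirichlet eigenfunction of $(-\Delta)_p^s$ on $B_R$, whose eigenvalue satisfies $\lambda_1(R)\sim R^{-sp}$, would then force $\Lambda_k\le \lambda_1(R)$, contradicting $\Lambda_k\to \infty$.

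The main obstacle is this final eigenvalue step: the linear bound $f(u)\ge \delta_0 u$ does not match the $(p-1)$-homogeneity of $(-\Delta)_p^s$, so one must first absorb the uniform smallness $u\le M_k$ before any comparison is meaningful. Moreover, since $u$ does not vanish outside $B_R(x_k)$, the test-function argument must be set up carefully so that the nonlocal contribution from $B_R(x_k)^c$ carries the correct sign—this uses that the first eigenfunction extended by zero has nonpositive fractional $p$-Laplacian on $B_R^c$, which is the crucial ingredient for the Picone-type inequality in this setting.
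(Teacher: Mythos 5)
Your proposal takes a genuinely different route from the paper. The paper argues directly by a sliding barrier: it chooses a smooth compactly supported bump $\phi_{\varepsilon_1,R_0}$ of height $\varepsilon_1<t_0$ on a ball $B_{R_0}$, checks that $u>\phi_{\varepsilon_1,R_0}$ near a fixed reference point $y_0$ deep inside $\Omega$, and slides the center along a segment to an arbitrary target $y$. At a hypothetical first touching point the nonlocal kernel computation forces $\scrL_p u-\scrL_p\phi<0$, while the equation together with $(F_2)$, the scaling estimate $(-\Delta)_p^s\phi_{\varepsilon_1,R_0}\lesssim\varepsilon_1^{p-1}R_0^{-sp}$, and the decay of $a$ from $(A_2)$ force the same difference to be nonnegative once $R_0$ is large; no compactness or blow-down is used. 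You instead run a compactness argument: blow down at a bad sequence, show by a pointwise strong-maximum-principle step that the limit vanishes identically, conclude $u$ is uniformly small on $B_R(x_k)$ for large $k$, and then deduce $(-\Delta)_p^s u\ge\Lambda_k u^{p-1}$ with $\Lambda_k\to\infty$, contradicting the first Dirichlet eigenvalue bound $\lambda_1(B_R)\sim R^{-sp}$. Both routes consume the same three ingredients --- the linear lower bound $f(u)\ge\delta_0 u$, the decay $(A_2)$, and the $R^{-sp}$ scaling of the operator --- but in a different order, and your eigenvalue comparison is $(p-1)$-homogeneous, so it is insensitive to where the touching point lands, a point at which the paper's bump-barrier estimate is less tight.

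Two places in your outline need more care, though neither seems fatal. First, the $C^\alpha_{loc}$ compactness extracted from the uniform H\"older bound does not give $C^{1,1}_{loc}$ limits, so you cannot pass to the limit in the principal value pointwise; you must either work with a weak or viscosity formulation, or supply a higher-order interior estimate, before the identity $(-\Delta)_p^s u_\infty(0)=f(0)$ and the conclusion $u_\infty\equiv 0$ are justified. Second, your Picone step, if executed as a pointwise touching comparison between $u$ and $t\psi_1$, requires enough regularity of the first eigenfunction $\psi_1$ to evaluate $(-\Delta)_p^s\psi_1$ in the principal-value sense; known interior regularity for fractional $p$-eigenfunctions is $C^{1,\alpha}_{loc}$, not $C^{1,1}_{loc}$, so you would have to fall back on the integral Picone inequality in a weak setting, or else replace $\psi_1$ by the same smooth bump the paper uses, paying for the loss of $(p-1)$-homogeneity by invoking $(F_2)$ again. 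With that substitution your argument collapses onto the paper's; as written it is a distinct, compactness-based proof whose technical backbone (blow-down plus eigenvalue comparison) is standard but heavier than what the lemma actually needs.
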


\begin{proof}
Let's choose an auxiliary function $\phi\in C_{0}^{\infty}(\R^{n})$ as follows,
\begin{equation}\label{phix}
\phi(x)=
\begin{cases}
ce^{\frac{1}{|x|^{2}-1}}, & |x|<1,\\
0, & |x|\geq 1,
\end{cases}
\end{equation}
where $c>0$ with $\phi(0)=1$. For $z\in\Omega$, take $R>0$ such that $B_{R}(z)\subset\Omega$. Define
$$\phi_{R}(x):=\phi(\frac{x-z}{R}),\,\,\,\phi_{\varepsilon,R}(x):=\varepsilon\phi(\frac{x-z}{R}).$$
It's obvious that $\phi_{\varepsilon,R}(z)=\max_{x\in B_{R}(z)}\phi_{\varepsilon,R}(x)=\varepsilon$. By calculating we can imply that
\begin{equation} \label{phCvare}
\left\{
\begin{aligned}
&(-\Delta)_{p}^{s}\phi_{\varepsilon,R}(x)\leq \frac{C}{R^{sp}}\varepsilon^{p-1},\,\,\mbox{in}\,\,B_{R}(z),\\
&\phi_{\varepsilon,R}(x)=0,\,\,\mbox{on}\,\,B^{c}_{R}(z),
\end{aligned}
\right.
\end{equation}
where $C$ is a positive constant.

For $y_{0}\in \Omega$ with  $dist(y_{0},\partial \Omega)> R_{0}>R_{1}$,  we choose $\varepsilon_{0}>0$ small enough such that
$$\varepsilon_{0}<\inf_{x\in B_{R_{0}}(y_{0})}u(x).$$
Let $0<\varepsilon_{1}<\min\{\varepsilon_{0}, t_{0}\}$, we have that
\begin{equation}\label{uvpR0}
u(x)>\varepsilon_{1}\phi_{R_{0}}(x)=\phi_{\varepsilon_{1},R_{0}}(x),\,\,\mbox{for}\,\,x\in B_{R_{0}}(y_{0}).
\end{equation}
We set $t\in[0,1]$, $y\in\Omega$ with $dist(y, \partial\Omega)>R_{0}$, $y_{t}=ty+(1-t)y_{0}$  and $$\omega_{t}(x):=u(x)-\phi_{\varepsilon_{1},R_{0}}(x),\,\,\,\mbox{for}\,\,x\in \overline{B_{R_{0}}(y_{t})}.$$
According to \eqref{uvpR0}, we have
$$\omega_{0}(x)=u(x)-\phi_{\varepsilon_{1},R_{0}}(x)>0,\,\,\,\mbox{for}\,\,x\in \overline{B_{R_{0}}(y_{0})},$$
and
\begin{equation}\label{partialB}
\omega_{t}(x)>0,\,\,\,\mbox{for}\,\, x \in \partial B_{R_{0}}(y_{t}).
\end{equation}

Now, we claim that
\begin{equation} \label{wtR}
\omega_{t}(x)>0, \,\,\, \mbox{for}\,\, x\in B_{R_{0}}(y_{t}).
\end{equation}
in other words, i.e.
\begin{equation} \label{tiomt}
\tilde{\omega}(t):=\min_{x\in B_{R_{0}}(y_{t})}(u(x)-\phi_{\varepsilon_{1},R_{0}}(x))> 0,\,\,\mbox{for}\,\, x\in B_{R_{0}}(y_{t}).
\end{equation}
In the following, we prove the claim \eqref{wtR} by contradiction. Since $\tilde{\omega}$ is continuous about $t$ and $\tilde{\omega}(0)>0$, assume on the contrary, if there exists $t_{0}$ such that the graph of $\phi_{\varepsilon_{1},R_{0}}$ touches the graph of $u$ at some point $x_{t_{0}}\in B_{R_{0}}(y_{t_{0}})$, i.e., $\tilde{\omega}(t_{0})=0,$ $\omega_{t}(x_{t_{0}})=0$, then we have that
\begin{equation}\label{tvap1R}
 \begin{aligned}
 &\scrL_{p}(u)(x_{t_{0}})-\scrL_{p}(\phi)(x_{t_{0}})\\
=&\ (-\Delta)_{p}^{s}u(x_{t_{0}})+a(x_{t_{0}})|u|^{p-2}u(x_{t_{0}})-  \left((-\Delta)_{p}^{s}\phi_{\varepsilon_{1},R_{0}}(x_{t_{0}})+a(x_{t_{0}})|\phi_{\varepsilon_{1,R_{0}}}|^{p-2}\phi_{\varepsilon_{1,R_{0}}}(x_{t_{0}})\right)\\
=&\ C_{n,s,p}P.V.\int_{\R^{n}}\frac{G\left(u(x_{t_{0}})-u(y)\right)-G\left(\phi_{\varepsilon_{1},R_{0}}(x_{t_{0}})-\phi_{\varepsilon_{1},R_{0}}(y)\right)}{|x_{t_{0}}-y|^{n+sp}}dy\\
=&\ C_{n,s,p}P.V.\int_{B_{R_{0}}(y_{t_{0}})}\frac{G\left(u(x_{t_{0}})-u(y)\right)-G\left(\phi_{\varepsilon_{1},R_{0}}(x_{t_{0}})-\phi_{\varepsilon_{1},R_{0}}(y)\right)}{|x_{t_{0}}-y|^{n+sp}}dy\\
&\ +C_{n,s,p}P.V.\int_{\R^{n}\backslash B_{R_{0}}(y_{t_{0}})} \frac{G\left(u(x_{t_{0}})-u(y)\right)-G\left(\phi_{\varepsilon_{1},R_{0}}(x_{t_{0}})\right)}{|x_{t_{0}}-y|^{n+sp}}dy\\
=&\ C_{n,s,p}P.V.(I_{1}+I_{2}).
\end{aligned}
\end{equation}

 For $I_{1}$, since $y\in B_{R_{0}}(y_{t_{0}})$ and the monotonicity of $G$, we have
$$\left(u(x_{t_{0}})-u(y)\right)-\left(\phi_{\varepsilon_{1},R_{0}}(x_{t_{0}})-\phi_{\varepsilon_{1},R_{0}}(y)\right)=\omega_{t_{0}}(x_{t_{0}})-\omega_{t_{0}}(y)\leq 0,$$
which implies that
$$G\left(u(x_{t_{0}})-u(y)\right)-G\left(\phi_{\varepsilon_{1},R_{0}}(x_{t_{0}})-\phi_{\varepsilon_{1},R_{0}}(y)\right)\leq 0.$$
Therefore, we obtain that
$$I_{1}\leq 0.$$

For $I_{2}$, due to $y\in \R^{n}\backslash B_{R_{0}}(y_{t_{0}})$, we can deduce
$$u(x_{t_{0}})-u(y)-\phi_{\varepsilon_{1},R_{0}}(x_{t_{0}})=-u(y)\leq 0\,\,\,\,\mbox{but}\,\,\not\equiv 0.$$
It yields that
$$G\left(u(x_{t_{0}})-u(y)\right)-G\left(\phi_{\varepsilon_{1},R_{0}}(x_{t_{0}})\right)\leq 0\,\,\,\,\,\mbox{but}\,\,\not\equiv 0.$$
Therefore we have
$$I_{2}<0.$$
Hence, from $\eqref{tvap1R}$ we have
\begin{equation}\label{utphva1R}
\scrL_{p}(u)(x_{t_{0}})-\scrL_{p}(\phi)(x_{t_{0}})<0.
\end{equation}

On the other hand, since the estimation \eqref{phCvare} for $\phi$ and the choice of $\varepsilon_{1}$, we can deduce that
\begin{equation}\label{R0spc0}
 \begin{aligned}
&\scrL_{p}(u)(x_{t_{0}})-\scrL_{p}(\phi_{\varepsilon_{1},R_{0}})(x_{t_{0}})\\
=&\ (-\Delta)_{p}^{s}u(x_{t_{0}})+a(x_{t_{0}})|u|^{p-2}u(x_{t_{0}})-  \left((-\Delta)_{p}^{s}\phi_{\varepsilon_{1},R_{0}}(x_{t_{0}})+a(x_{t_{0}})|\phi_{\varepsilon_{1},R_{0}}|^{p-2}\phi_{\varepsilon_{1},R_{0}}(x_{t_{0}})\right)\\
\geq &\ f(u(x_{t_{0}}))-\frac{C\varepsilon_{1}^{p-1}}{R_{0}^{sp}}-a(x_{t_{0}})|\phi_{\varepsilon_{1},R_{0}}|^{p-2}\phi_{\varepsilon_{1},R_{0}}(x_{t_{0}})\\
\geq &\ \delta_{0} \phi_{\varepsilon_{1},R_{0}}(x_{t_{0}})-\frac{C\varepsilon_{1}^{p-1}}{R_{0}^{sp}}-a(x_{t_{0}})|\phi_{\varepsilon_{1},R_{0}}|^{p-2}\phi_{\varepsilon_{1},R_{0}}(x_{t_{0}}).
\end{aligned}
\end{equation}
 When $R_{0}\rightarrow\infty$, we have $\frac{C\varepsilon_{1}^{p-1}}{R_{0}^{sp}}\rightarrow0$ and $dist(x_{t_{0}},\partial\Omega)\rightarrow\infty$.

 Thus, by the condition $(A_{2})$ in Theorem \ref{Them2}, we obtain that from $\eqref{R0spc0}$
$$\scrL_{p}(u)(x_{t_{0}})-\scrL_{p}(\phi)(x_{t_{0}})\geq \delta_{0} \phi_{\varepsilon_{1},R_{0}}(x_{t_{0}})>0,$$
which conflicts \eqref{utphva1R}. Therefore, we finish the proof of the claim \eqref{wtR}. When  $t=1$ in \eqref{tiomt}, we obtain
$$u(x)>\varepsilon_{1}\phi(\frac{x-y}{R_{0}}), \,\,\mbox{for}\,\,x\in \overline{B_{R_{0}}(y)}.$$
In particular, when $x=y$, it implies that $\varepsilon_{1}\phi(\frac{x-y}{R_{0}})=\varepsilon_{1}$. Set a positive constant $\widehat{c}\leq \varepsilon_{1}$, it yields that
$$u(x)>\widehat{c},\,\,\,\mbox{for}\,\, x\in\Omega\,\,\mbox{with} \,\,dist(x,\partial\Omega)>R_{0}.$$
This completes the proof of the lemma.
\end{proof}


Now, we give the proof of Proposition \ref{thm2-2}.
\begin{proof}
Firstly, we choose the same auxiliary function $\phi(x)$ as \eqref{phix} in Lemma \ref{lem2-3}. Here we set $\phi(0)=\mu$, and $\phi_{R}(x):=\phi(\frac{x-x_{R}}{R})$, where $x_{R}$ is chosen by $dist(x_{R},\partial\Omega)>2R>R_{0}$, with $B_{2R}(x_{R})\subset \Omega$. It's obvious that $\phi_{R}(x_{R})=\max_{x\in B_{R}(x_{R})}\phi_{R}(x)=\phi(0)=\mu$. By the same calculation as \eqref{phCvare}, we have that
\begin{equation}\label{410}
\left\{
\begin{aligned}
&(-\Delta)_{p}^{s}\phi_{R}(x)\leq\frac{C}{R^{sp}}\mu^{p-1},\,\,\,\mbox{in}\,B_{R}(x_{R}),\\
&\phi_{R}(x)=0,\,\,\,\mbox{on}\, B^{c}_{R}(x_{R}).
\end{aligned}
\right.
\end{equation}

On the one hand, from the equation \eqref{SUM} and \eqref{410}, it yields that
\begin{equation}\label{fRa}
\begin{aligned}
&\scrL_{p}(u)(x)-\scrL_{p}(\phi_{R})(x)\\
=&\ (-\Delta)_{p}^{s}u(x)+a(x)|u|^{p-2}u(x)-\left((-\Delta)_{p}^{s}\phi_{R}(x)+a(x)|\phi_{R}|^{p-2}\phi_{R}(x)\right)\\
\geq &\ f(u(x))-\frac{C}{R^{sp}}\mu^{p-1}-a(x)|\phi_{R}|^{p-2}\phi_{R}(x).
\end{aligned}
\end{equation}

On the other hand, let $\omega_{R}(x):= u(x)-\phi_{R}(x)$, since $\phi_{R}(x_{R})=\mu$ and $u(x)<\mu$, we can infer that there exists $\overline{x}_{R}$ such that
$$\omega_{R}(\overline{x}_{R})=\min_{x\in B_{R}(x_{R})} \omega_{R}(x)<0.$$
It follows that
$$u(\overline{x}_{R})-\phi_{R}(\overline{x}_{R})\leq u(x_{R})-\phi_{R}(x_{R}),$$
which implies
\begin{equation}\label{uop}
u(\overline{x}_{R})\leq u(x_{R})-\left(\phi_{R}(x_{R})-\phi_{R}(\overline{x}_{R})\right)\leq u(x_{R}).
\end{equation}
 We have that
\begin{equation}\nonumber
\begin{aligned}
&\scrL_{p}(u)(\overline{x}_{R})-\scrL_{p}(\phi_{R})(\overline{x}_{R})\\
=&\ (-\Delta)_{p}^{s}u(\overline{x}_{R})+a(\overline{x}_{R})|u|^{p-2}u(\overline{x}_{R})-
\left((-\Delta)_{p}^{s}\phi_{R}(\overline{x}_{R})+a(\overline{x}_{R})|\phi_{R}|^{p-2}\phi_{R}(\overline{x}_{R})\right)\\
=&\ C_{n,s,p}P.V.\int_{\R^{n}}\frac{G\left(u(\overline{x}_{R})-u(y)\right)-G\left(\phi_{R}(\overline{x}_{R})-\phi_{R}(y)\right)}{|\overline{x}_{R}-y|^{n+sp}}dy\\
&\ +a(\overline{x}_{R})|u|^{p-2}u(\overline{x}_{R})-a(\overline{x}_{R})|\phi_{R}|^{p-2}\phi_{R}(\overline{x}_{R})\\
= &\ C_{n,s,p}P.V.\int_{B_{R}(x_{R})}\frac{G\left(u(\overline{x}_{R})-u(y)\right)-G\left(\phi_{R}(\overline{x}_{R})-\phi_{R}(y)\right)}{|\overline{x}_{R}-y|^{n+sp}}dy\\
&\ +C_{n,s,p}P.V.\int_{\R^{n}\backslash B_{R}(x_{R})} \frac{G\left(u(\overline{x}_{R})-u(y)\right)-G\left(\phi_{R}(\overline{x}_{R})\right)}{|\overline{x}_{R}-y|^{n+sp}}dy\\ &\ +a(\overline{x}_{R})|u|^{p-2}u(\overline{x}_{R})-a(\overline{x}_{R})|\phi_{R}|^{p-2}\phi_{R}(\overline{x}_{R})\\
=&\ C_{n,s,p}P.V.(I_{1}+I_{2})+I_{3}.
\end{aligned}
\end{equation}
For $I_{1}$, due to $y\in B_{R}(x_{R})$ and the monotonicity of $G$, we have that
$$\left(u(\overline{x}_{R})-u(y)\right)-\left(\phi_{R}(\overline{x}_{R})-\phi_{R}(y)\right)=\omega_{R}(\overline{x}_{R})-\omega_{R}(y)\leq 0.$$
It follows that
$$G\left(u(\overline{x}_{R})-u(y)\right)-G\left(\phi_{R}(\overline{x}_{R})-\phi_{R}(y)\right)\leq 0.$$
Hence, we get that $$I_{1}\leq 0.$$

For $I_{2}$, $y\in \R^{n}\backslash B_{R}(x_{R})$, we have
$$u(\overline{x}_{R})-u(y)-\phi_{R}(\overline{x}_{R})=\omega_{R}(\overline{x}_{R})-u(y)\leq 0,$$
which implies that
$$G(u(\overline{x}_{R})-u(y))-G(\phi_{R}(\overline{x}_{R}))\leq 0.$$
So, we have that $$I_{2}\leq 0.$$
Therefore, we obtain that
\begin{equation}\label{fRa2}
\scrL_{p}(u)(\overline{x}_{R})-\scrL_{p}(\phi_{R})(\overline{x}_{R})\leq I_{3}.
\end{equation}

  Combing \eqref{fRa} with \eqref{fRa2} , we have that
 $$ f(u(x))-\frac{C}{R^{sp}}\mu^{p-1}\leq a(\overline{x}_{R})|u|^{p-2}u(\overline{x}_{R}),$$
 from Lemma \ref{lem2-3} and the condition $(A_{2})$, i.e. $|a(x)|\thicksim\frac{1}{|x|^{\alpha}}$, we can imply that
$$0<f(u(\overline{x}_{R}))\leq \left(\frac{1}{|{x}_{R}|^{\alpha}}+\frac{C}{R^{sp}}\right)\mu^{p-1}.$$
Hence, we can obtain that
\begin{equation}
f(u(\overline{x}_{R}))\to 0,\,\,\mbox{as}\,\,R\to \infty .\label{fuR}
\end{equation}

Now, we claim that
\begin{equation}\label{uoverxRmu}
u(\overline{x}_{R})\to \mu, \,\,\,\mbox{as}\,\,R \to \infty.
\end{equation}
In fact, by Lemma \ref{lem2-3} we have $u(\overline{x}_{R})>\widehat{c}>0$ when the point $\overline{x}_{R}$ is far away from the boundary. Suppose on the contrary, $u(\overline{x}_{R})\nrightarrow \mu$, then there exist two cases as follows.

$Case1$. For $u(\overline{x}_{R})\in(0,t_{0}]$. Since the condition $(F_{2})$, we have
$$f(u(\overline{x}_{R}))\geq \delta_{0}u(\overline{x}_{R})\geq \delta_{0}\varepsilon_{1}>0,$$
which contradicts \eqref{fuR}. So, the $Case1$ can't hold.

$Case2$. For $u(\overline{x}_{R})\in[t_{0},t_{1}]$. From the condition $(F_{1})$, i.e., $f(u(x))>0$ as $u(x)\in (0,\mu)$ and $f$ is continuous, it deduces that
$$\min_{u(x)\in[t_{0},t_{1}]}f(u(x))=c_{t}>0,$$
where $c_{t}$ is a constant. That conflicts with \eqref{fuR}, the $Case2$ is not valid.

Thus, we derive from the above two cases and the fact \eqref{fuR} that $u(\overline{x}_{R})$ must fall in open interval $(t_{1},\mu)$, in which $f$ is non-increasing in the condition$(F_{3})$. It infers that \eqref{uoverxRmu} must be valid, the claim is obtained.

From \eqref{uop}, we have
$$\mu\leftarrow u(\overline{x}_{R})\leq u(x_{R})<\mu,\,\,\,\mbox{as}\,\, R\rightarrow\infty.$$
Therefore, $u(x_{R})\rightarrow \mu$ as $ R\rightarrow \infty $. The proof of Proposition \ref{thm2-2} is completed.
\end{proof}

%

Proposition \ref{thm2-2} indicates that there exists a maximizing sequence of the solution of the equation \eqref{SUM} near infinity, which is the most important property of the sliding process.

In the following, we prove Theorem \ref{Them2} completely.
\begin{proof}
For $\tau\geq 0$, denote
$$u_{\tau}(x):=u(x+\tau e_{n}),\,\,\, \omega_{\tau}(x):=u(x)-u_{\tau}(x),$$
where $e_{n}=(0,0,\cdots,1)$.  We will carry out the proof in two steps.

$Step 1$. For $\tau>0$ sufficiently large, We will show that
\begin{equation}
\omega_{\tau}(x)\leq 0,\,\,\,\mbox{for}\,\,x\in \R^{n}.  \label{ota}
\end{equation}
This provides the starting point for the sliding method. For $h>0$, define
$$\Omega_{h}:=\{x\in \R^{n}|\,dist(x,\partial\Omega)<h\}.$$
By Proposition \ref{thm2-2}, there exists a constant $M_{0}>0$ large enough such that for $\tau>M_{0}$, we have
\begin{equation}
u_{\tau}(x)\in (t_{1},\mu), \,\,\,\mbox{for}\,\,x\in\Omega. \label{ut1}
\end{equation}

Assume on the contrary that \eqref{ota} is violated, i.e., there exists a constant $A$ such that
$$\sup_{x\in \R^{n}}\omega_{\tau}(x)=A>0.$$
Then there exists sequences $\{x^{k}\}\subseteq \R^{n}$ and $\{\gamma_{k}\}\subseteq(0,1)$ with $\gamma_{k}\rightarrow1$ as $k\rightarrow\infty$ such that
\begin{equation} \label{omtaxkga}
\begin{aligned}
&\omega_{\tau}(x^{k})\geq \gamma_{k}A,\\
&\omega_{\tau}(x^{k})\rightarrow A, \,\,\, \mbox{as} \,\,k \rightarrow \infty.
\end{aligned}
\end{equation}
Since $\omega_{\tau}(x)\leq 0$, for any $x\in \R^{n}\setminus\Omega$, then we have $\{x^{k}\}\nsubseteq \R^{n} \setminus \Omega$. Meanwhile, by Proposition \ref{thm2-2}, $\omega_{\tau}(x)\rightarrow 0\,\,\mbox{as} \,\, dist(x,\partial\Omega)\rightarrow \infty,$ it infers that there exists a constant $M$ with $M>R_{0}>0$ such that
$$\{x^{k}\} \subseteq \Omega_{M}.$$

Let
\begin{equation}
\Phi(x):=
\begin{cases}
ce^{\frac{1}{|x|^{2}-4}} &|x|<2\\
0 &|x|\geq 2
\end{cases}
\end{equation}
where $c>0$ such that $\Phi(0)=1$. Define
\begin{equation}\label{Phirk}
\Phi_{r_{k}}(x):=\Phi(\frac{x-x^{k}}{r_{k}}), \,\,\,x \in B_{2r_{k}}(x^{k}).
\end{equation}

For any $x\in B_{2r_{k}}(x^{k})\setminus B_{r_{k}}(x^{k})$, we take $\varepsilon_{k}, r_{k}>0$  such that
\begin{equation}\label{otvkPr}
\begin{aligned}
\omega_{\tau}(x^{k})+\varepsilon_{k}\Phi_{r_{k}}(x^{k})&\geq A+\varepsilon_{k}\Phi_{r_{k}}(x^{k}+r_{k}e)\\
&\geq \omega_{\tau}(x)+\varepsilon_{k}\Phi_{r_{k}}(x),
\end{aligned}
\end{equation}
with $\varepsilon_{k}\rightarrow0$, $r_{k}\rightarrow0$ as $k\rightarrow\infty$, and $e$ is any unit vector in $\R^{n}$. It follows that there exists $\overline{x}^{k} \in B_{r_{k}}(x^{k})$ such that
\begin{equation} \label{ovP}
\begin{aligned}
\omega_{\tau}(\overline{x}^{k})+\varepsilon_{k}\Phi_{r_{k}}(\overline{x}^{k}) & =\max_{x\in B_{r_{k}}(x^{k})}\left(\omega_{\tau}(x)+\varepsilon_{k}\Phi_{r_{k}}(x)\right) \\
&\geq \omega_{\tau}(x^{k})+\varepsilon_{k}\Phi_{r_{k}}(x^{k}).
\end{aligned}
\end{equation}
It implies that
\begin{equation}\label{omtaovga}
A\geq\omega_{\tau}(\overline{x}^{k})\geq \omega_{\tau}(x^{k})+\left(\varepsilon_{k}\Phi_{r_{k}}(x^{k})-\varepsilon_{k}\Phi_{r_{k}}(\overline{x}^{k})\right)\geq \omega_{\tau}(x^{k})\geq \gamma_{k}A>0.
\end{equation}
From \eqref{otvkPr} and \eqref{ovP}, we have
\begin{equation} \label{tovxkvP}
\begin{aligned}
&\omega_{\tau}(\overline{x}^{k})+\varepsilon_{k}\Phi_{r_{k}}(\overline{x}^{k})\geq A\geq \omega_{\tau}(x),\,\, \mbox{for}\,\,x\in\R^{n}\\
&\omega_{\tau}(\overline{x}^{k})+\varepsilon_{k}\Phi_{r_{k}}(\overline{x}^{k})=\sup_{x\in\R^{n}}\left(\omega_{\tau}(x)+\varepsilon_{k}\Phi_{r_{k}}(x)\right),\\
&\omega_{\tau}(\overline{x}^{k})\rightarrow A, \,\,\,\,\mbox{as}\,\,k\rightarrow \infty.
\end{aligned}
\end{equation}

We recall the following $L^{p}$-inequality and the result in $Lemma$ $5.2$ in Appendix \cite{WCL}
\begin{equation}\label{pin}
|t_{1}+t_{2}|^{p-2}(t_{1}+t_{2})\leq 2^{p-2}(|t_{1}|^{p-2}t_{1}+|t_{2}|^{p-2}t_{2}),\,\,\mbox{for}\,\,t_{1}+t_{2}>0,\,\,p\geq 2,
\end{equation}
\begin{equation}\label{lem5.2}
\left|(-\Delta)^{s}_{p}(u+\varepsilon_{k}\Phi_{r_{k}})(\overline{x}^{k})-(-\Delta)^{s}_{p}u(\overline{x}^{k})\right|<\varepsilon_{k}C_{1}r_{k}^{-sp}+C_{2}r_{k}^{p(1-s)},
\end{equation}
where $C_{1}$ and $C_{2}$ are independent of $\varepsilon_{k}$.

On the one hand, by \eqref{lem5.2} and the equation \eqref{SUM} we can imply that
\begin{equation} \label{uvtP}
\begin{aligned}
&\scrL_{p}(u+\varepsilon_{k}\Phi_{r_{k}})(\overline{x}^{k})-\scrL_{p}(u_{\tau})(\overline{x}^{k})\\
=&\ (-\Delta)^{s}_{p}(u+\varepsilon_{k}\Phi_{r_{k}})(\overline{x}^{k})-(-\Delta)^{s}_{p}u(\overline{x}^{k})+(-\Delta)^{s}_{p}u(\overline{x}^{k})-(-\Delta)^{s}_{p}u_{\tau}(\overline{x}^{k})\\
&\ +a(\overline{x}^{k})|u+\varepsilon_{k}\Phi_{r_{k}}|^{p-2}(u+\varepsilon_{k}\Phi_{r_{k}})(\overline{x}^{k})-a(\overline{x}^{k})|u_{\tau}|^{p-2}u_{\tau}(\overline{x}^{k})\\
\leq &\ \varepsilon_{k}C_{1}r_{k}^{-sp}+C_{2}r_{k}^{p(1-s)}+ f(u(\overline{x}^{k}))-f(u_{\tau}(\overline{x}^{k}))\\
&\ +a(\overline{x}^{k})|u+\varepsilon_{k}\Phi_{r_{k}}|^{p-2}(u+\varepsilon_{k}\Phi_{r_{k}})(\overline{x}^{k})-a(\overline{x}^{k})|u|^{p-2}u(\overline{x}^{k}).
\end{aligned}
\end{equation}

On the other hand,
\begin{equation} \label{I12a}
\begin{aligned}
&\scrL_{p}(u+\varepsilon_{k}\Phi_{r_{k}})(\overline{x}^{k})-\scrL_{p}(u_{\tau})(\overline{x}^{k})\\
=&\ (-\Delta)^{s}_{p}(u+\varepsilon_{k}\Phi_{r_{k}})(\overline{x}^{k})-(-\Delta)^{s}_{p}u_{\tau}(\overline{x}^{k})+a(\overline{x}^{k})|u+\varepsilon_{k}\Phi_{r_{k}}|^{p-2}(u+\varepsilon_{k}\Phi_{r_{k}})(\overline{x}^{k})\\
&\ -a(\overline{x}^{k})|u_{\tau}|^{p-2}u_{\tau}(\overline{x}^{k})\\
= &\ C_{n,s,p}P.V.\int_{\R^{n}} \frac{G\left(u(\overline{x}^{k})+\varepsilon_{k}\Phi_{r_{k}}(\overline{x}^{k})-u(y)-\varepsilon_{k}\Phi_{r_{k}}(y)\right)-G\left(u_{\tau}(\overline{x}^{k})-u_{\tau}(y)\right)}{|\overline{x}^{k}-y|^{n+sp}}dy\\
&\ +a(\overline{x}^{k})\left(|u+\varepsilon_{k}\Phi_{r_{k}}|^{p-2}(u+\varepsilon_{k}\Phi_{r_{k}})(\overline{x}^{k})-|u_{\tau}|^{p-2}(u_{\tau})(\overline{x}^{k})\right)\\
=&\ C_{n,s,p}P.V.\int_{B_{2r_{k}}(x^{k})}\frac{G\left(u(\overline{x}^{k})+\varepsilon_{k}\Phi_{r_{k}}(\overline{x}^{k})-u(y)-\varepsilon_{k}\Phi_{r_{k}}(y)\right)-G\left(u_{\tau}(\overline{x}^{k})-u_{\tau}(y)\right)}{|\overline{x}^{k}-y|^{n+sp}}dy\\
&\ +C_{n,s,p}P.V.\int_{\R^{n}\backslash B_{2r_{k}}(x^{k})}\frac{G\left(u(\overline{x}^{k})+\varepsilon_{k}\Phi_{r_{k}}(\overline{x}^{k})-u(y)\right)-G\left(u_{\tau}(\overline{x}^{k})-u_{\tau}(y)\right)}{|\overline{x}^{k}-y|^{n+sp}}dy\\
&\ +a(\overline{x}^{k})\left(|u+\varepsilon_{k}\Phi_{r_{k}}|^{p-2}(u+\varepsilon_{k}\Phi_{r_{k}})(\overline{x}^{k})-|u_{\tau}|^{p-2}u_{\tau}(\overline{x}^{k})\right)\\
=:&\  I_{1}+I_{2}+a(\overline{x}^{k})\left(|u+\varepsilon_{k}\Phi_{r_{k}}|^{p-2}(u+\varepsilon_{k}\Phi_{r_{k}})(\overline{x}^{k})-|u_{\tau}|^{p-2}u_{\tau}(\overline{x}^{k})\right).
\end{aligned}
\end{equation}
For $I_{1}$, since $\overline{x}^{k}$ is a maximum point of $\omega_{\tau}+\varepsilon_{k}\Phi_{r_{k}}$, which can imply that
\begin{equation}\nonumber
\begin{aligned}
&\left(u(\overline{x}^{k})+\varepsilon_{k}\Phi_{r_{k}}(\overline{x}^{k})\right)-\left(u(y)+\varepsilon_{k}\Phi_{r_{k}}(y)\right)-\left(u_{\tau}(\overline{x}^{k})-u_{\tau}(y)\right)\\
=&\ \omega_{\tau}(\overline{x}^{k})+\varepsilon_{k}\Phi_{r_{k}}(\overline{x}^{k})-(\omega_{\tau}(y)+\varepsilon_{k}\Phi_{r_{k}}(y))\\
\geq &\  0.
\end{aligned}
\end{equation}
Since $G$ is monotone increasing, it yields that
$$G\left(u(\overline{x}^{k})+\varepsilon_{k}\Phi_{r_{k}}(\overline{x}^{k})-(u(y)+\varepsilon_{k}\Phi_{r_{k}}(y))\right)-G\left(u_{\tau}(\overline{x}^{k})-u_{\tau}(y)\right)\geq 0.$$
Hence, we obtain that
\begin{equation}\label{I1}
I_{1}\geq 0.
\end{equation}

For estimating $I_{2}$, we can imply that by \eqref{pin}
\begin{equation}\nonumber
\begin{aligned}
I_{2}&=\int_{\R^{n}\backslash B_{2r_{k}}(x^{k})}  \frac{G\left(u(\overline{x}^{k})+\varepsilon_{k}\Phi_{r_{k}}(\overline{x}^{k})-u(y)\right)-G\left(u_{\tau}(\overline{x}^{k})-u_{\tau}(y)\right)}{|\overline{x}^{k}-y|^{n+sp}}dy\\
& \geq 2^{2-p}C_{n,s,p}\int_{\R^{n}\backslash B_{2r_{k}}(x^{k})}\frac{G\left(\omega_{\tau}(\overline{x}^{k})+\varepsilon_{k}\Phi_{r_{k}}(\overline{x}^{k})-\omega_{\tau}(y)\right)}{|\overline{x}^{k}-y|^{n+sp}}dy\\
& \geq 2^{2-p}C_{n,s,p}\int_{(\R^{n}\backslash B_{2r_{k}}(x^{k}))\cap(\R^{n}\backslash \Omega)} \frac{G\left(\omega_{\tau}(\overline{x}^{k})+\varepsilon_{k}\Phi_{r_{k}}(\overline{x}^{k})-\omega_{\tau}(y)\right)}{|\overline{x}^{k}-y|^{n+sp}}dy\\
& \geq A^{p-1}2^{2-p}C_{n,s,p} \int_{(\R^{n}\backslash B_{2r_{k}}(x^{k}))\cap(\R^{n}\backslash\Omega)}
\frac{1}{|\overline{x}^{k}-y|^{n+sp}}dy\\
& \geq C_{A,s,p}\int_{(\R^{n}\backslash B_{2r_{k}}(x^{k}))\cap(\R^{n}\backslash \Omega)}\frac{1}{|x^{k}-y|^{n+sp}}dy,\\
\end{aligned}
\end{equation}
where $C_{A,s,p}$ is a positive constant, only depending on $A$, $s$ and $p$. The last inequality holds from the fact
$$|\overline{x}^{k}-y|\leq |\overline{x}^{k}-x^{k}| + |x^{k}-y|\leq \frac{3}{2}|x^{k}-y|.$$
It follows that
\begin{equation} \label{I2}
I_{2}\geq C_{A,s,p}\int_{(\R^{n}\backslash B_{2r_{k}}(x^{k}))\cap(\R^{n}\backslash \Omega)}\frac{1}{|x^{k}-y|^{n+sp}}dy\geq c_{d}>0,
\end{equation}
where $x^{k}\in \Omega_{\tau}$, $c_{d}$ is a positive constant, with depending on the distance of $x_{k}$ from $\partial\Omega$. Thus, we have that
\begin{equation} \label{I1I2crk}
I_{1}+I_{2}\geq c_{d}>0.
\end{equation}

To sum up, we can deduce from \eqref{uvtP}, \eqref{I12a} and \eqref{I1I2crk} that
\begin{equation}\label{cxkvaCrk}
\begin{aligned}
0<c_{d}\leq&\  \varepsilon_{k}C_{1}r_{k}^{-sp}+C_{2}r_{k}^{p(1-s)}+ f(u(\overline{x}^{k}))-f(u_{\tau}(\overline{x}^{k}))\\
&\ -a(\overline{x}^{k})\left(|u|^{p-2}u(\overline{x}^{k})-|u_{\tau}|^{p-2}u_{\tau}(\overline{x}^{k})\right),
\end{aligned}
\end{equation}
Since $\tau$ is large enough, by \eqref{ut1} and \eqref{tovxkvP}, we have known that
$$\mu>u(\overline{x}^{k})>u_{\tau}(\overline{x}^{k})>t_{1}.$$
 Recall the condition $(F_{3})$ that $f(u)$ is  non-increasing for  $u\in(t_{1},\mu)$, we have that
 $$f(u(\overline{x}^{k}))\leq f(u_{\tau}(\overline{x}^{k})).$$
And from the condition $(A_{1})$ i.e., $|a(x)|<\left|\frac{df}{du}\big/\frac{dG}{du}\right|$, we can infer that as $k$ is large enough by applying the Mean Value Theorem
\begin{equation}
\begin{aligned}
&\left(\frac{f(u(\overline{x}^{k}))-f(u_{\tau}(\overline{x}^{k}))}{u(\overline{x}^{k})-u_{\tau}(\overline{x}^{k})} \bigg/
\frac{G(u(\overline{x}^{k}))-G(u_{\tau}(\overline{x}^{k}))}{u(\overline{x}^{k})-u_{\tau}(\overline{x}^{k})}-a(\overline{x}^{k})\right)\\
&\ \times\left(G(u(\overline{x}^{k}))-G(u_{\tau}(\overline{x}^{k}))\right)\\
=&\ \left(\frac{f(u(\overline{x}^{k}))-f(u_{\tau}(\overline{x}^{k}))}{u(\overline{x}^{k})-u_{\tau}(\overline{x}^{k})} \bigg/ G'(\xi(\overline{x}^{k}))-a(\overline{x}^{k}) \right) \left(G(u(\overline{x}^{k}))-G(u_{\tau}(\overline{x}^{k}))\right)\\
\leq &\ \left(\frac{f(u(\overline{x}^{k}))-f(u_{\tau}(\overline{x}^{k}))}{u(\overline{x}^{k})-u_{\tau}(\overline{x}^{k})} \bigg/ G'(u(\overline{x}^{k}))-a(\overline{x}^{k}) \right) \left(G(u(\overline{x}^{k}))-G(u_{\tau}(\overline{x}^{k}))\right)\\
\leq &\ 0,
\end{aligned}
\end{equation}
where $u_{\tau}(\overline{x}^{k})<\xi(\overline{x}^{k})<u(\overline{x}^{k})$. Thus, \eqref{cxkvaCrk} can be simplified as follows
\begin{equation} \label{vaprk}
0<c_{d}\leq \varepsilon_{k}C_{1}r_{k}^{-sp}+C_{2}r_{k}^{p(1-s)}.
\end{equation}

When $k$ is sufficiently large,  we can set $\varepsilon_{k}=r_{k}^{t+sp}$, with $t>0$ fixed. We claim that the sequence $\{\overline{x}^{k}\}$  in \eqref{ovP} is still a maximizing sequence of $\omega_{\tau}(x)$ in $\R^{n}$. Indeed, let $C_{\Phi}:=\Phi_{r_{k}}(x^{k})-\Phi_{r_{k}}(x^{k}+r_{k}e)=1-e^{-\frac{1}{12}}$. From \eqref{otvkPr}, we have known that $\varepsilon_{k}$ satisfies $$\varepsilon_{k}\geq\frac{A-\omega_{\tau}(x^{k})}{C_{\Phi}},$$
So, if $\varepsilon_{k}=r_{k}^{t+sp}$, we choose that the sequence $\{r_{k}\}$ is such that
$$r^{sp+t}_{k}\geq\frac{A-\omega_{\tau}(x^{k})}{C_{\Phi}},$$
and
$$r_{k}\to 0,\,\,\mbox{as}\,\, k\rightarrow \infty.$$
This illustrates that the choice of the sequences $\{r_{k}\}$ and $\{\varepsilon_{k}\}$ depends on the maximizing sequence $\{x^{k}\}$.

Haven known that $|\overline{x}^{k}-x^{k}|\leq r_{k}$, and the continuity of $\omega_{\tau}(x)$, when $k$ is large enough, $r_{k}$ is small enough, we have that
for any $\delta>0$,
$$|\omega_{\tau}(x^{k})-\omega_{\tau}(\overline{x}^{k})|<\delta.$$
Setting $\delta=(1-C_{\Phi})r^{sp+t}_{k}$, it can imply that
\begin{equation}
\begin{aligned}
A-\omega_{\tau}(\overline{x}^{k})& =A-\omega_{\tau}(x^{k})+\omega_{\tau}(x^{k})-\omega_{\tau}(\overline{x}^{k})\\
&\ \leq C_{\Phi}\varepsilon_{k}+(1-C_{\Phi})r^{sp+t}_{k}\\
&\ \leq \varepsilon_{k}.
\end{aligned}
\end{equation}
Thus, we obtain that the claim hold. Actually, we may consider that $\{\overline{x}^{k}\}$ is the subsequence of $\{x^{k}\}$ in the sense of  $\overline{x}^{k}=x^{k+m}$ for some $m\in\Z$.

Hence, from \eqref{vaprk} we can imply that
$$0<c_{d}\leq C_{2}r_{k}^{t}+C_{3}r_{k}^{p(1-s)}\rightarrow 0,\,\,\,\mbox{as}\,\,k\rightarrow\infty,\,\,r_{k}\rightarrow 0,$$
which makes a contradiction. Therefore, for $\tau$ large enough, \eqref{ota} holds, i.e.
$$\omega_{\tau}\leq 0, \,\,\, \mbox{for}\,\, x\in \R^{n}.$$

$Step2$. Decrease $\tau$ continuously as long as \eqref{ota} is valid, we will prove that for any $\tau >0,$
\begin{equation} \label{anywtaul0}
\omega_{\tau}(x)\leq 0, \,\,\, \mbox{for}\,\,  x\in \R^{n}.
\end{equation}
 Define
$$\tau_{0}:= \inf\{\tau>0 |\, \omega_{\tau}(x)\leq 0, \forall x\in \R^{n}\},$$

Then, we claim that
\begin{equation}\label{tau00}
\tau_{0}=0.
\end{equation}
The claim \eqref{tau00} is equivalent to \eqref{anywtaul0}, which implies that the solution $u$ is monotone increasing in $x_{n}$- direction.

In the following, we prove the claim \eqref{tau00} by contradiction. Assume on the contrary that $\tau_{0}>0.$ By the definition of $\tau_{0}$, we have that
\begin{equation}\label{tau0leq0}
\omega_{\tau_{0}}(x)\leq 0, \,\,\, x\in \R^{n}.
\end{equation}
Then, there exists two cases that may occur as follows,
\begin{itemize}
 \item[$(1)$] $\sup_{x\in \Omega_{M}}\omega_{\tau_{0}}(x)=0.$
 \item[$(2)$] $\sup_{x\in \Omega_{M}}\omega_{\tau_{0}}(x)<0,$
 \end{itemize}
where $M$ is the same as in $Step1$. We will prove that the two cases can not exist, i.e. $\tau_{0}>0$ is not valid, which finishes the proof of the claim \eqref{tau00}.

$(1)$  $\sup_{x\in \Omega_{M}}\omega_{\tau_{0}}(x)=0.$

In the following, we will prove this case does not occur. We argue by contradiction, if $\sup_{x\in \Omega_{M}}\omega_{\tau_{0}}(x)=0$ is valid, then
there exists a sequence $\{x^{k}\}\subseteq\Omega_{M}$ such that
$$\omega_{\tau_{0}}(x^{k})\rightarrow0,\,\,\, \mbox{as}\,\, k\rightarrow\infty.$$
Taking the similar argument as in $Step1$, there exists $\overline{x}^{k}\in B_{r_{k}}(x^{k})$ satisfying
\begin{equation}\nonumber
\begin{aligned}
\omega_{\tau_{0}}(\overline{x}^{k})+\varepsilon_{k}\Phi_{r_{k}}(\overline{x}^{k})&=\max_{x\in B_{r_{k}}(x^{k})}(\omega_{\tau_{0}}(x)+\varepsilon_{k}\Phi_{r_{k}}(x))\\
&=\sup_{x\in \R^{n}}(\omega_{\tau_{0}}(x)+\varepsilon_{k}\Phi_{r_{k}}(x))\\
&\geq 0
\end{aligned}
\end{equation}
where the definition of $\Phi_{r_{k}}$  in \eqref{Phirk}. It follows that
\begin{equation}  \label{ovP0}
\begin{aligned}
0\geq\omega_{\tau_{0}}(\overline{x}^{k})&\geq \omega_{\tau_{0}}(x^{k})+\varepsilon_{k}\Phi_{r_{k}}(x^{k})-\varepsilon_{k}\Phi_{r_{k}}(\overline{x}^{k})\\
&\geq \omega_{\tau_{0}}(x^{k})\rightarrow 0,\,\,\, \mbox{as}\,\, k\rightarrow \infty,
\end{aligned}
\end{equation}
which implies that
\begin{equation}\label{ometau0olxk0}
\omega_{\tau_{0}}(\overline{x}^{k})\rightarrow 0,\,\,\, \mbox{as}\,\,k\rightarrow\infty.
\end{equation}
The same calculation and estimation as in $Step1$, we obtain that
\begin{equation}\nonumber
\begin{aligned}
C_{s,p}\int_{\R^{n}\backslash B_{2r_{k}}(x^{k})}\frac{G(-\omega_{\tau_{0}}(y))}{|x^{k}-y|^{n+sp}}dy & \leq \varepsilon_{k}C_{1}r_{k}^{-sp}+C_{2}r_{k}^{p(1-s)}+ f(u(\overline{x}^{k}))-f(u_{\tau_{0}}(\overline{x}^{k}))\\
& -a(\overline{x}^{k})\left(|u|^{p-2}u(\overline{x}^{k})-|u_{\tau_{0}}|^{p-2}u_{\tau_{0}}(\overline{x}^{k})\right).
\end{aligned}
\end{equation}
The term $C_{s,p}\int_{\R^{n}\backslash B_{2r_{k}}(x^{k})}\frac{G(-\omega_{\tau_{0}}(y))}{|x^{k}-y|^{n+sp}}dy$ is derived from
\begin{equation}\nonumber
\begin{aligned}
I_{2}&=\int_{\R^{n}\backslash B_{2r_{k}}(x^{k})}  \frac{G\left(u(\overline{x}^{k})+\varepsilon_{k}\Phi_{r_{k}}(\overline{x}^{k})-u(y)-\varepsilon_{k}\Phi_{r_{k}}(y)\right)-G\left(u_{\tau_{0}}(\overline{x}^{k})-u_{\tau_{0}}(y)\right)}{|\overline{x}^{k}-y|^{n+sp}}dy\\
& \geq 2^{2-p}C_{n,s,p}\int_{\R^{n}\backslash B_{2r_{k}}(x^{k})}\frac{G\left(\omega_{\tau_{0}}(\overline{x}^{k})+\varepsilon_{k}\Phi_{r_{k}}(\overline{x}^{k})-\omega_{\tau_{0}}(y)\right)}{|\overline{x}^{k}-y|^{n+sp}}dy\\
& \geq C_{s,p}\int_{\R^{n}\backslash B_{2r_{k}}(x^{k})}\frac{G(-\omega_{\tau_{0}}(y))}{|x^{k}-y|^{n+sp}}dy,
\end{aligned}
\end{equation}
where $C_{s,p}$ is a positive constant with depending on $s$ and $p$. Applying the fact that \eqref{tau0leq0}, we have that for any $k>0$
$$-\omega_{\tau_{0}}(y)\geq 0, \,\,\, y\in \R^{n}\backslash B_{2r_{k}}(x^{k}),$$
which implies that
\begin{equation} \label{ioyvd}
\begin{aligned}
0 & \leq C_{s,p}\int_{\R^{n}\backslash B_{2r_{k}}(x^{k})}\frac{G(-\omega_{\tau_{0}}(y))}{|x^{k}-y|^{n+sp}}dy\\
  &\leq \varepsilon_{k}C_{1}r_{k}^{-sp}+C_{2}r_{k}^{p(1-s)}+
  f(u(\overline{x}^{k}))-f(u_{\tau_{0}}(\overline{x}^{k}))\\
  &\ \quad -a(\overline{x}^{k})\left(|u|^{p-2}u(\overline{x}^{k})-|u_{\tau_{0}}|^{p-2}u_{\tau_{0}}(\overline{x}^{k})\right).
\end{aligned}
\end{equation}

Having knowing that $\omega_{\tau_{0}}(\overline{x}^{k})\rightarrow 0$, and when $k$ is large, we set $\varepsilon_{k}=r_{k}^{t+sp}$ as in $Step1$, the right hand side of \eqref{ioyvd} gives that
\begin{equation} \label{vCdft0}
\begin{aligned}
&C_{1}r_{k}^{t}+C_{2}r_{k}^{p(1-s)}+ f(u(\overline{x}^{k}))-f(u_{\tau_{0}}(\overline{x}^{k}))\\
&\ -a(\overline{x}^{k})\left(|u|^{p-2}u(\overline{x}^{k})-|u_{\tau_{0}}|^{p-2}u_{\tau_{0}}(\overline{x}^{k})\right)\rightarrow0,\,\,k\rightarrow \infty.
\end{aligned}
\end{equation}
Thus, from \eqref{ioyvd} we obtain that when $k\rightarrow \infty,$
\begin{equation} \label{Brkomta00}
\begin{aligned}
0 &\ \leq C_{s,p}\int_{\R^{n}\backslash B_{2r_{k}}(x^{k})}\frac{G(-\omega_{\tau_{0}}(y))}{|x^{k}-y|^{n+sp}}dy\\
&\ = C_{s,p}\int_{\R^{n}\backslash B_{2r_{k}}(0)}\frac{G(-\omega_{\tau_{0}}(y+x^{k}))}{|y|^{n+sp}}dy\\
&\ \rightarrow 0.
\end{aligned}
\end{equation}

 Here we give a claim that $G(-\omega_{\tau_{0}}(y+x^{k}))\rightarrow 0\,\, \mbox{for}\,\,k\rightarrow\infty,$ i.e., for any $\varepsilon>0$, there exists $K$ large such that for $k>K$, $$G(-\omega_{\tau_{0}}(y+x^{k}))<\varepsilon, \,\,\forall \,y\in B^{c}_{2r_{k}}(0).$$
In fact, we assume that the claim is not true. Hence, there exists $\delta>0$, for any $K>0$, then there must exist $k$ such that $k>K$ and $y_{0}\in B^{c}_{2r_{k}}(0)$, the following holds that
$$G(-\omega_{\tau_{0}}(y_{0}+x^{k}))>\delta.$$
Since the continuity of $\omega_{\tau_{0}}$ and $G$, we have that there exists a neighbourhood set $U_{y_{0}}^{\varepsilon_{0}}$ of $y_{0}$ with non-zero measures satisfying $$G(-\omega_{\tau_{0}}(y+x^{k}))>\frac{\delta}{2}\,\,\,\,\, \mbox{for}\,\, y\in U_{y_{0}}^{\varepsilon_{0}},$$
where $\varepsilon_{0}>0$ is fixed, only depending on $\delta$. Now, we consider that there will happen the following two cases. One case is that $dist(U_{y_{0}}^{\varepsilon_{0}},0 )<1$, then we have that
\begin{equation}\nonumber
\begin{aligned}
&\ C_{s,p}\int_{ B^{c}_{2r_{k}}(0)}\frac{G(-\omega_{\tau_{0}}(y+x^{k}))}{|y|^{n+sp}}dy\\
\geq &\ C_{s,p}\int_{ B^{c}_{2r_{k}}(0)\cap U_{y_{0}}^{\varepsilon_{0}}} \frac{G(-\omega_{\tau_{0}}(y+x^{k}))}{|y|^{n+sp}}dy\\
\geq &\ C_{s,p}\int_{ B^{c}_{2r_{k}}(0)\cap U_{y_{0}}^{\varepsilon_{0}}}\frac{\delta}{2}\\
\geq &\ C_{s,p,\delta,\varepsilon_{0}}>0,
\end{aligned}
\end{equation}
where the constant $C_{s,p,\delta,\varepsilon_{0}}$ only depends on $s,\,p,\,\delta$ and $\varepsilon_{0}$. However, this contradicts with \eqref{Brkomta00} as $k\rightarrow \infty$. In this case, we can obtain the above claim.
The other case is that when $dist(U_{y_{0}}^{\varepsilon_{0}},0)\geq 1$, we will imply that
\begin{equation}\nonumber
\begin{aligned}
&\ C_{s,p}\int_{ B^{c}_{2r_{k}}(0)}\frac{G(-\omega_{\tau_{0}}(y+x^{k}))}{|y|^{n+sp}}dy\\
\geq &\ C_{s,p}\int_{ B^{c}_{2r_{k}}(0)\cap U_{y_{0}}^{\varepsilon_{0}}} \frac{G(-\omega_{\tau_{0}}(y+x^{k}))}{|y|^{n+sp}}dy\\
\geq &\ C_{s,p}\int_{ B^{c}_{2r_{k}}(0)\cap U_{y_{0}}^{\varepsilon_{0}}}\frac{\delta/2}{|y|^{n+sp}}dy\\
\geq &\ C'_{s,p,\delta,\varepsilon_{0}}>0,
\end{aligned}
\end{equation}
where the constant $C'_{s,p,\delta,\varepsilon_{0}}$ only depends on $s,\,p,\,\delta$ and $\varepsilon_{0}$. There exists a contradiction with \eqref{Brkomta00} as $k\rightarrow \infty$ in the second case.

Combing the two cases, we obtain the claim that
 $$G(-\omega_{\tau_{0}}(y+x^{k}))\rightarrow 0, \quad \,\,\mbox{for}\,\,k\rightarrow\infty.$$
 Denote $u^{k}(y):= u(y+x^{k})$, having known the fact that $u^{k}$ is uniform H\"older continuous in $\R$, so $u^{k}$ is equip-continuous. By the Arzel$\grave{a}$-Ascoli Theorem, together with the above conclusion, we derive that there exists $u^{\infty}$ such that
\begin{equation}\nonumber
\begin{aligned}
\omega_{\tau_{0}}(y+x^{k})=u^{k}(y)-u^{k}(y+\tau_{0}e_{n})&\ \rightarrow u^{\infty}(y)-u^{\infty}(y+\tau_{0}e_{n})\\
&\ = 0, \,\,\quad \mbox{as}\,\,k\rightarrow\infty,
\end{aligned}
\end{equation}
in the words, that is
\begin{equation}\label{infty0}
u^{\infty}(y)\equiv u^{\infty}(y+\tau_{0}e_{n}),\,\quad \forall\,\, y\in \bigcup^{\infty}_{k=1}B^{c}_{2r_{k}}(0).
\end{equation}
Recall the sequence $\{x^{k}\}\subseteq\Omega_{M}$, and $u(x)\equiv 0$ in $\R \backslash \Omega$ while $u(x)>0$ in $\Omega$, by \eqref{infty0} we can imply that there exists $x^{0}\in \R \backslash \Omega$ such that
\begin{equation}\label{uytau00}
0=u^{\infty}(x^{0})=u^{\infty}(x^{0}+\tau_{0}e_{n})=u^{\infty}(x^{0}+\tau_{0}e_{n})=\cdots=u^{\infty}(x^{0}+k\tau_{0}e_{n}).
\end{equation}
From Theorem \ref{thm2-2} we have known that
\begin{equation}\label{uyoktau}
u^{\infty}(x^{0}+k\tau_{0}e_{n})\rightarrow \mu, \,\,\,\mbox{as}\, \,k\rightarrow \infty.
\end{equation}
Obviously, the equality \eqref{uytau00} contradicts \eqref{uyoktau}. Thus, we conclude that the case $(1)$ is not valid.

$(2)$ $\sup_{x\in \Omega_{M}}\omega_{\tau_{0}}(x)<0$.

Now, we prove that the case $(2)$ does not occur. On the contrary, we assume that $\sup_{x\in \Omega_{M}}\omega_{\tau_{0}}(x)<0$ is valid. Due to the continuity of $\omega$, for any $\eta\in (0,\tau_{0})$ small sufficiently, we obtain that
\begin{equation}
\sup_{x\in\Omega_{M}}\omega_{\tau}(x)\leq 0,\,\,\,0\leq\tau=\tau_{0}-\eta<\tau_{0}. \label{soteM}
\end{equation}
Suppose that $\sup_{x\in \R^{n}}\omega_{\tau}(x)>0,$ i.e., there exists a constant $A_{1}>0$ such that
$$\sup_{x\in \R^{n}}\omega_{\tau}(x)=A_{1}.$$
Then there exists a sequence $\{x^{k}\}\subseteq \R^{n}$ such that\\
$$\omega_{\tau}(x^{k})\rightarrow A_{1}, \,\,\, \mbox{as}\,\,k\rightarrow \infty.$$
Since $u=0$ in $\R^{n}\backslash\Omega$, we have that $\omega_{\tau}(x)\leq 0\,\,\,\mbox{for}\,\, x\in \R^{n}\backslash\Omega$. By Proposition \ref{thm2-2} and \eqref{soteM}, there exists a  constant $M_{1}$  such that as $ M_{1}>M >0$
$$\{x^{k}\}\subseteq \Omega_{M_{1}}\backslash \Omega_{M}.$$
And from the choice of $M$ in $Step1$, we also have that for any $x\in \Omega_{M_{1}}\backslash\Omega_{M}$,
$$u(x),\,u_{\tau}(x)\in (t_{1},\mu).$$

Taking the same arguments as in $Step1$ and the case $(1)$, we can also obtain the conclusion that
$$\omega_{\tau}(x)\leq 0, \,\,\, x\in \R^{n},$$
which contradicts the definition of $\tau_{0}$. Hence, the case $(2)$ can not happen.

To sum the above two cases up, we obtain that the claim \eqref{tau00} holds, i.e.,
$$\tau_{0}=0.$$
Thus, \eqref{anywtaul0} is also proved, i.e., for any $\tau>0$,
$$\omega_{\tau}(x)\leq 0, \,\,\,x\in\R^{n}.$$

Moreover, if $\sup_{x\in \Omega_{M}}\omega_{\tau}(x)=0$ for any $\tau>0$, recall the proof of the case $(1)$, we have known that this case does not happen. Therefore, we have that
$$\sup_{x\in \Omega_{M}}\omega_{\tau}(x)<0, \,\,\,\,\mbox{for}\,\,\tau>0.$$
This indicates that $u$ is strictly monotone increasing in $x_{n}-$direction. The proof of Theorem \ref{Them2} is completed.
\end{proof}

\textbf{Acknowledgments}

The authors thank sincerely to Professor Wenxiong Chen for valuable discussions and suggestions on the topic of this paper. The authors are grateful to the referees for their careful reading and valuable comments.

\vspace{0.4cm}

\end{document}